\newcommand{\B}{\mathbb{B}}
\newcommand{\N}{\mathbb{N}}
\newcommand{\R}{\mathbb{R}}
\renewcommand{\S}{\mathbb{S}}
\renewcommand{\epsilon}{\varepsilon}
\newtheorem{thm}{Theorem}
\newtheorem{prop}[thm]{Proposition}
\newtheorem{corollaire}[thm]{Corollary}
\newtheorem{lemma}[thm]{Lemma}
\theoremstyle{definition}
\newtheorem{defn}[thm]{Definition}
\newtheorem{rem}[thm]{Remark}
\newtheorem{question}[thm]{Question}
\title {Sharp upper bounds for Steklov eigenvalues of a hypersurface of revolution with two boundary components in Euclidean space}
\author{Léonard Tschanz}
\author{Léonard Tschanz 
\vspace{1cm} \\
Institut de Mathématiques, Neuchâtel University, Neuchâtel, Switzerland \\
leonard.tschanz@unine.ch \\
ORCID: 0000-0003-2097-6759}
\date{}
\begin{document}

\maketitle

\begin{abstract}
    We investigate the question of sharp upper bounds for the Steklov eigenvalues of a hypersurface of revolution in Euclidean space with two boundary components, each isometric to $\S^{n-1}$. 
    For the case of the first non zero Steklov eigenvalue, we give a sharp upper bound $B_n(L)$ (that depends only on the dimension $n \ge 3$ and the meridian length $L>0$) which is reached by a degenerated metric $g^*$ that we compute explicitly. We also give a sharp upper bound $B_n$ which depends only on $n$. 
    Our method also permits us to prove some stability properties of these upper bounds. 
    \medskip

    \noindent\textsc{R\'esum\'e.}
    Nous étudions la question des bornes supérieures optimales pour les valeurs propres de Steklov d'une hypersurface de révolution de l'espace euclidien avec deux composantes connexes du bord, chacune isométrique à $\S^{n-1}$. 
    Dans le cas de la première valeur propre de Steklov non nulle, nous donnons une borne supérieure optimale $B_n(L)$ (qui ne dépend que de la dimension $n$ et de la longueur d'un méridien $L >0$) qui est atteinte par une métrique dégénérée $g^*$ que l'on calcule explicitement. Nous donnons aussi une borne supérieure optimale $B_n$ qui ne dépend que de $n$. 
    Notre méthode nous permet également de prouver des propriétés de stabilité que possèdent ces bornes supérieures.
    \medskip
    
    \noindent \textbf{Keywords:} Spectral geometry, Steklov problem, hypersurfaces of revolution, sharp upper bounds.
    \medskip

\end{abstract}

\section{Introduction}

Let $(M, g)$ be a smooth compact connected Riemannian manifold  of dimension $n \ge 2$ with smooth boundary $\Sigma$. The Steklov problem on $(M, g)$ consists of finding the real numbers $\sigma$ and the harmonic functions $f : M \longrightarrow \R$ such that $\partial_\nu f=\sigma f$ on $\Sigma$, where $\nu$ denotes the outward normal on $\Sigma$. Such a $\sigma$ is called a Steklov eigenvalue of $(M, g)$. It is well known that the Steklov spectrum forms a discrete sequence $0= \sigma_0(M,g) < \sigma_1(M,g) \le \sigma_2(M,g) \le \ldots \nearrow \infty$. Each eigenvalue is repeated with its multiplicity, which is finite. If the context is clear, then we simply write $\sigma_k(M)$ for $\sigma_k(M,g)$.
\medskip

It is known \parencite[Thm. 1.1]{CEG2} that for any connected compact manifold $(M, g)$ of dimension $n \ge 3$, there exists a family $(g_\epsilon)$ of Riemannian metrics conformal to $g$ which coincide with $g$ on the boundary of $M$, such that
\begin{align*}
    \sigma_1(M, g_\epsilon) \underset{\epsilon \to 0}{\longrightarrow} \infty.
\end{align*}
Therefore, to obtain upper bounds for the Steklov eigenvalues, it is necessary  to study manifolds that satisfy certain additional constraints. We refer to \cite{colbois2022recent} for an overview of the current state-of-the-art on geometric upper bounds for the Steklov eigenvalues.
\medskip

Recently, authors investigated the Steklov problem on manifolds of revolution \cite{FTY, FS, XC, XC2}.  A natural constraint for the manifolds is that they are (hyper)surfaces of revolution in Euclidean space. Some work has already been done on these kinds of manifolds, see for example \cite{CGG, CV}. We refer to \parencite[Sect. 3.1]{CGG} for a review about what these manifolds are, and consider a particular case in this paper that we define below (see \Cref{defn : revolution}).
\medskip

This work led to the discovery of lower and upper bounds for the Steklov eigenvalues of a hypersurface of revolution. We begin by recalling some recent results. 
\medskip

We first consider results for hypersurfaces of revolution with one boundary component that is isometric to $\S^{n-1}$. In dimension $n=2$, it is proved in \parencite[Prop. 1.10]{CGG}  that each surface of revolution $M \subset \R^3$ with boundary $\S^1 \subset \R^2 \times \{0\}$ is Steklov isospectral to the unit disk. In dimension $n \ge 3$, many bounds were given. It is proved that each hypersurface of revolution $M \subset \R^{n+1}$ with one boundary component isometric to $\S^{n-1}$  satisfies $\sigma_k(M) \ge \sigma_k(\B^n)$, where $\B^n$ is the Euclidean ball and equality  holds if and only if $M= \B^n \times \{0\}$, see \parencite[Thm. 1.8]{CGG}.
In \parencite[Thm. 1]{CV}, the authors show the following upper bound: if $M \subset \R^{n+1}$ is a hypersurface of revolution with one boundary component isometric to $\S^{n-1}$, then for each $k \ge 1$, we have 
\begin{align*}
    \sigma_{(k)}(M) < k+n-2,
\end{align*}
where $\sigma_{(k)}(M)$ is the $k$th distinct Steklov eigenvalue of $M$. Although there exists no equality case within the collection of hypersurfaces of revolution, this upper bound is sharp. Indeed, for each $\epsilon >0$ and each $k\ge 1$, there exists a hypersurface of revolution $M_\epsilon$ such that $\sigma_{(k)}(M_\epsilon) > k+n-2-\epsilon$.
\medskip

These results concern hypersurfaces of revolution that have one boundary component isometric to $\S^{n-1}$. Therefore, the goal of this paper is to investigate the Steklov problem on a hypersurface of revolution with two boundary components. As was already done in \cite{CGG} and in \cite{CV}, we will consider hypersurfaces with boundary components isometric to  $\S^{n-1}$. We begin by defining the context.

\begin{defn} \label{defn : revolution}
An $n$-dimensional compact hypersurface of revolution $(M,g)$ in Euclidean space  with two boundary components each isometric to  $\S^{n-1}$ is the warped product $M=[0, L] \times \S^{n-1}$ endowed with the Riemannian metric 
\begin{align*}
    g(r,p) = dr^2 + h^2(r)g_0(p),
\end{align*}
where $(r,p) \in [0, L] \times \S^{n-1}$, $g_0$ is the canonical metric of the $(n-1)$-sphere of radius one and $h : [0, L] \longrightarrow \R_+^*$ is a smooth function which  satisfies:
\begin{enumerate}
    \item[(1)] $|h'(r)| \le 1$ for all $r \in [0, L]$;
    \item[(2)] $h(0)=h(L)=1$.
\end{enumerate}

Assumption $(1)$ comes from the fact that $(M, g)$ is a hypersurface in Euclidean space $\R^{n+1}$, see \parencite[Sect. 3.1]{CGG} for more details. Assumption $(2)$ implies that each component of the boundary is isometric to  $\S^{n-1}$, as commented in Fig. \ref{fig : surface  revolution def}.
\medskip

We now make some remarks on the terminology used throughout this paper.
If $M = [0, L] \times \S^{n-1}$ and $h : [0, L] \longrightarrow \R_+^*$ satisfies the properties above, we say  that $M$ is a \textit{hypersurface of revolution}, we say that  $g(r, p) = dr^2 + h^2(r)g_0(p)$ is a \textit{metric of revolution on $M$ induced by $h$} and we call the number $L$ the \textit{meridian length} of $M$.

\end{defn}

\begin{figure}[H]
\centering
\includegraphics[scale=0.6]{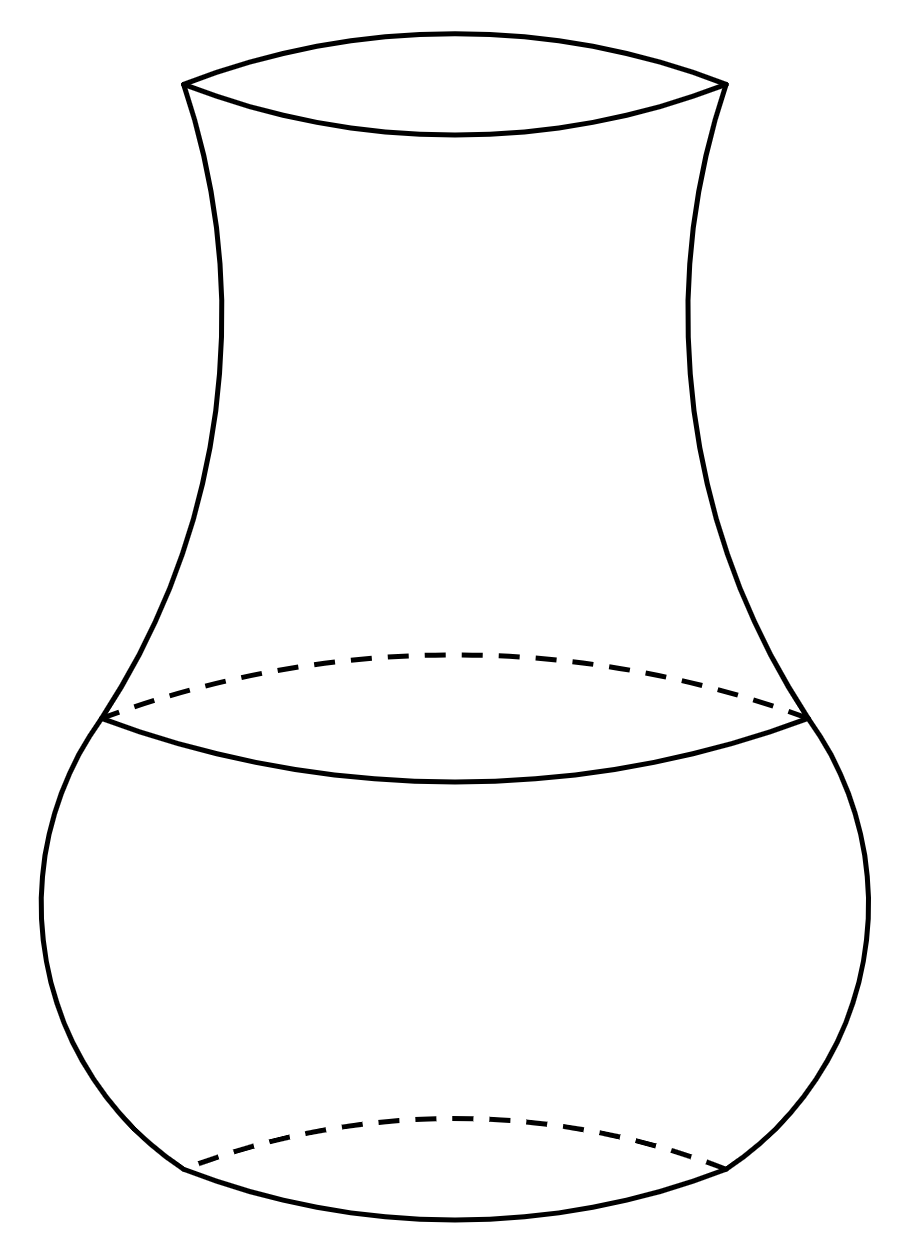}
\caption{Since $h(0)=h(L)=1$, the boundary of $M$ consists of two copies of $\S^{n-1}$.}
\label{fig : surface revolution def}
\end{figure}

Some lower bounds have already been obtained is this case. Indeed, \parencite[Thm. 1.11]{CGG} states that if  $M \subset \R^{n+1}$, $n \ge 3$, is a hypersurface of revolution (in the sense of \Cref{defn : revolution}), and $L>2$ is the meridian length of $M$, then for each $k \ge 1$, 
\begin{align*}
    \sigma_k(M) \ge \sigma_k(\B^n \sqcup \B^n).
\end{align*}
Moreover, this inequality is sharp. In the case $0 < L \le 2$, a lower bound is also obtained:
\begin{align*}
    \sigma_k(M) \ge \left( 1- \frac{L}{2}\right)^{n-1} \sigma_k(C_L, dr^2 + g_0),
\end{align*}
 However, this inequality does not appear to be sharp.
\medskip

In this paper, we will look for \textit{upper bounds} for the Steklov eigenvalues of hypersurfaces of revolution. First, we recall that there exists a bound $B_n^k(L)$ such that for all metrics of revolution $g$ on $M$, we have $\sigma_k(M, g) < B_n^k(L)$. Indeed, Proposition 3.3 of \cite{CGG} states that if $M = [0, L] \times \S^{n-1}$ is a hypersurface of revolution, then we have
\begin{align*}
    \sigma_k(M) \le \left(1+ \frac{L}{2}\right)^{n-1} \sigma_k(C_L, dr^2 + g_0).
\end{align*}
As such, a natural question is the following:
\begin{center}
    \textit{Given the dimension $n \ge 3$ and the meridian length $L$ of $M$, does a metric of revolution $g^*$ on $M$ exist, such that $\sigma_k(M,g) \le \sigma_k(M,g^*)$ for all metrics of revolution $g$ on $M$? } 
\end{center}

Our investigations show that the answer is negative. Indeed, a sharp upper bound $B_n^k(L)$ exists, but no metric of revolution on $M=[0,L]\times \S^{n-1}$ achieves the equality case. However, there exists a non-smooth metric $g^*$, that we will call a \textit{degenerated maximizing  metric}, which maximizes the $k$th Steklov eigenvalue, for each $k \in \N$. This metric is non-smooth, therefore $g^*$ is not a metric of revolution on $M$ in the sense of Definition \ref{defn : revolution}. 
Endowed with this metric, $(M,g^*)$ can be seen as two annuli glued together; we provide more information about this degenerated maximizing  metric $g^*$  and the geometric representation of $(M, g^*)$ in Sect. \ref{sect : proof}.
\medskip

We state our first result:
\begin{thm} \label{thm : pricipal}
Let $(M=[0,L] \times \S^{n-1}, g_1)$ be a hypersurface of revolution in Euclidean space with two boundary components each isometric to   $\S^{n-1}$  and meridian length~$L$. We suppose $n \ge 3$. Then there exists a metric of revolution $g_2$ on $M$ such that for each $k \ge 1$,
\begin{align*}
    \sigma_k(M,g_1) < \sigma_k(M, g_2).
\end{align*}
\end{thm}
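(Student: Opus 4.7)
The plan is to build $g_2$ from $g_1$ by slightly enlarging the warping function $h_1$ on an open subinterval where the admissibility constraint $|h'|\le 1$ is strict, and then to compare the spectra via the spherical harmonic decomposition. The key observation is that the Steklov Rayleigh quotient on $(M,g)$ with $g=dr^2+h^2 g_0$ involves the coefficients $h^{n-1}$ and $h^{n-3}$, both non-decreasing in $h$ for $n\ge 3$, so enlarging $h$ produces (at least weakly) larger Steklov eigenvalues while leaving the boundary metric $g_0$ untouched.

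\textbf{Construction of $h_2$.} I first observe that there must exist an open interval $I\subset (0,L)$ and a constant $c<1$ such that $|h_1'|\le c$ on $I$: otherwise, by continuity of $h_1'$, one would have $h_1'\equiv \pm 1$ on $(0,L)$, which together with $h_1(0)=1$ would force $h_1(L)=1\pm L\ne 1$. Let $\psi$ be a smooth non-negative bump compactly supported in $I$ with $\psi\not\equiv 0$, and set $h_2 := h_1 + \epsilon \psi$. For $\epsilon>0$ small enough, $h_2$ is smooth, strictly positive, satisfies $h_2(0)=h_2(L)=1$ and $|h_2'|\le c+\epsilon\|\psi'\|_\infty\le 1$, and verifies $h_2>h_1$ on the non-empty open set $\{\psi>0\}$. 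The corresponding metric $g_2=dr^2+h_2^2 g_0$ is then a genuine metric of revolution in the sense of Definition \ref{defn : revolution}.

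\textbf{Comparison of spectra.} Separation of variables on $\S^{n-1}$ decouples the Steklov problem on $(M,g)$ into a countable family of one-dimensional Sturm--Liouville--Steklov problems indexed by the Laplace eigenvalues $\lambda_i$ of the sphere, producing eigenvalues $\sigma^{(i)}_j(h)$; the full Steklov spectrum of $(M,g)$ is the reordering of this multiset, weighted by the multiplicities of the $\lambda_i$. Since the Rayleigh quotient
\begin{equation*}
R^{(i)}_h(u)=\frac{\int_0^L h^{n-1}(u')^2+\lambda_i h^{n-3} u^2\,dr}{u(0)^2+u(L)^2}
\end{equation*}
is pointwise non-decreasing in $h$ for $n\ge 3$, I immediately obtain $\sigma^{(i)}_j(h_2)\ge \sigma^{(i)}_j(h_1)$ by min-max. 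To upgrade to strict inequality, I interpolate along $h_t=(1-t)h_1+th_2$, $t\in [0,1]$ (each $h_t$ is admissible by convexity of the constraints), and combine analytic perturbation theory with a Hellmann--Feynman variation formula to express $\frac{d}{dt}\sigma^{(i)}_j(h_t)$ as the integral of $\psi$ against a pointwise non-negative density in $u$ and $u'$. Unique continuation for the second-order ODE prevents the eigenfunction from vanishing together with its derivative on any open interval, forcing this derivative to be strictly positive, hence $\sigma^{(i)}_j(h_2)>\sigma^{(i)}_j(h_1)$ for every non-trivial pair $(i,j)$ (the special mode $i=0$ is handled directly via the explicit formula $\sigma^{(0)}_1(h)=2/\int_0^L h^{1-n}\,dr$). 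A short counting argument on ordered multisets then yields $\sigma_k(M,g_1)<\sigma_k(M,g_2)$ for every $k\ge 1$: if instead $\sigma_k(M,g_2)\le \sigma_k(M,g_1)$ held, the bijection $\sigma^{(i)}_j(h_1)\leftrightarrow \sigma^{(i)}_j(h_2)$ combined with the strict pointwise inequality would produce at least $k$ elements of $\{\sigma^{(i)}_j(h_1)\}$ strictly less than $\sigma_k(M,g_1)$, a contradiction. The main obstacle is precisely the strict-monotonicity step in mode-by-mode analysis: weak monotonicity is routine, but upgrading to strict requires both the analytic perturbation machinery and unique continuation to rule out eigenfunctions insensitive to the perturbation of $h$.
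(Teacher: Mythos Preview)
Your argument is correct, but it differs from the paper's in two respects.

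\textbf{Construction of $h_2$.} You perturb $h_1$ by a small localized bump on an interval where $|h_1'|<1$. The paper instead replaces $h_1$ globally: it sets $m=\max h_1$, defines $h_2(r)=1+r$ on $[0,m-1]$, $h_2(r)=1+L-r$ on $[L-m+1,L]$, and chooses $h_2>m$ symmetrically in between. Both choices yield $h_2\ge h_1$ with strict inequality on an open interval, which is all the theorem needs. The paper's choice, however, is not incidental: iterating it produces a sequence $(h_i)$ that converges uniformly to the explicit tent function $h^*(r)=\min(1+r,\,1+L-r)$, and this limit is the ``degenerated maximizing metric'' on which all of the sharp bounds in the rest of the paper are computed. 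Your bump construction proves the theorem but does not feed into that program.

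\textbf{Strict inequality.} You decompose mode by mode, use simplicity of the one-dimensional Steklov eigenvalues, invoke analytic perturbation theory along $h_t$, and apply a Hellmann--Feynman identity together with ODE unique continuation to force $\frac{d}{dt}\sigma^{(i)}_j(h_t)>0$; a multiset counting argument then transfers this to the reordered spectrum. The paper avoids all of this machinery: it takes the span $E_{k+1}$ of the first $k+1$ Steklov eigenfunctions of $g_2$ as a test space for $\sigma_k(M,g_1)$ in the min--max, and obtains strictness by a short dichotomy---either the maximizer $f^*$ of $R_{g_1}$ on $E_{k+1}$ is a pure $\sigma_k(g_2)$-eigenfunction, in which case its radial part cannot be locally constant (directly from the ODE it satisfies, so $R_{g_1}(f^*)<R_{g_2}(f^*)$), or $f^*$ has a nontrivial component on a lower eigenfunction, in which case $R_{g_2}(f^*)<\sigma_k(g_2)$ by orthogonality. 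This is more elementary and self-contained than the perturbation-theoretic route, though your approach has the merit of giving strict monotonicity for each individual mode, which is a slightly finer statement.
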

This result implies that among all metrics of revolution on $M$, none maximizes the $k$th non zero Steklov eigenvalue. 
Nevertheless, given any metric of revolution $g_1$ on $M$, we can iterate Theorem \ref{thm : pricipal} to generate a sequence of metrics $(g_i)_{i=1}^\infty$ on $M$. This sequence converges to a unique non-smooth metric   $g^*$ on $M$, which is quite simple (see Sect. \ref{sect : proof}) and which maximizes the $k$th Steklov eigenvalue. That is why we call $g^*$ the degenerated maximizing metric. Hence, as we search for the optimal bounds $B_n^k(L)$, we must use information contained in  $g^*$.
\medskip

We start by studying the case $k=1$. We fix $n \ge 3$ and $L >0$ and search for a sharp upper bound $B_n(L)$ for $\sigma_1(M,g)$. In this case, we are able to calculate an expression for $B_n(L)$:
\begin{thm}  \label{thm : principal deux}
Let $(M= [0, L] \times \S^{n-1}, g)$ be a hypersurface of revolution  in Euclidean space with two boundary components each isometric to  $\S^{n-1}$ and dimension $n \ge 3$. Then the first non trivial Steklov eigenvalue $\sigma_1(M,g)$ is bounded above, by a bound that depends only on the dimension $n$ and the meridian length $L$ of $M$:
\begin{align*}
    \sigma_1(M,g) < B_n(L) := \min\left\{ \frac{(n-2)\left( 1+L/2 \right)^{n-2}}{\left( 1+L/2 \right)^{n-2}-1}, \frac{(n-1)\left( \left(1+L/2\right)^n -1 \right)}{\left( 1+ L/2 \right)^n +n-1}\right\}.
\end{align*}
Moreover, this bound is sharp: for each $\epsilon >0$, there exists a metric of revolution $g_\epsilon$ on $M$ such that $\sigma_1(M, g_\epsilon) >B_n(L)- \epsilon$.
\end{thm}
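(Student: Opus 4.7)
The plan is to identify the degenerate limit $g^*$ described in the paragraphs preceding the theorem as the effective maximizer of $\sigma_1$, to compute $\sigma_1(M, g^*)$ explicitly and show it equals $B_n(L)$, and then to use Theorem \ref{thm : pricipal} to upgrade equality into a strict inequality for genuine smooth metrics of revolution while using smooth approximations of $g^*$ for sharpness.

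First I would make the degenerate metric $g^*$ explicit. Under the constraints $h(0) = h(L) = 1$ and $|h'| \le 1$, the profile is pointwise bounded by $h^*(r) := 1 + \min(r, L-r)$, and iterating Theorem \ref{thm : pricipal} from any starting metric should produce a sequence whose warping functions $h_i$ converge to $h^*$. With $g^* = dr^2 + (h^*)^2 g_0$, the manifold $(M, g^*)$ is isometric to two copies of the Euclidean annulus $\{x \in \R^n : 1 \le |x| \le 1 + L/2\}$ glued along their outer spheres.

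Second I would compute $\sigma_1(M, g^*)$ by separating variables: look for eigenfunctions of the form $u(r) Y(p)$, where $Y$ is a spherical harmonic of degree $\ell$ on $\S^{n-1}$. On each annular half, $u$ is Euclidean-radial-harmonic and hence a linear combination of $(1+r)^\ell$ and $(1+r)^{-(\ell + n - 2)}$. Organizing candidate first eigenfunctions by their parity under $r \leftrightarrow L - r$: the antisymmetric $\ell = 0$ mode (forcing $u(L/2) = 0$) gives
\begin{equation*}
    \sigma = \frac{(n-2)(1+L/2)^{n-2}}{(1+L/2)^{n-2} - 1},
\end{equation*}
and the symmetric $\ell = 1$ mode (forcing $u'(L/2) = 0$) gives
\begin{equation*}
    \sigma = \frac{(n-1)\left((1+L/2)^n - 1\right)}{(1+L/2)^n + n - 1}.
\end{equation*}
A routine check (the $\ell = 0$ symmetric mode is constant, while the $\ell = 1$ antisymmetric mode and every $\ell \ge 2$ mode give larger values) then identifies $\sigma_1(M, g^*)$ as the minimum of these two numbers, namely $B_n(L)$.

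Third, for sharpness I would construct explicit smooth approximations $g_\epsilon$ of $g^*$ (rounding off the corner of $h^*$ near $r = L/2$ inside a shrinking neighborhood) and verify, by inserting the explicit eigenfunctions of $g^*$ as test functions into the Rayleigh quotient for $g_\epsilon$, that $\sigma_1(M, g_\epsilon) \ge B_n(L) - \epsilon$. The strict inequality $\sigma_1(M, g) < B_n(L)$ for every genuine smooth metric of revolution then follows from Theorem \ref{thm : pricipal}: any such $g$ admits a strict improvement, and the improved sequence remains bounded above by the just-derived bound $B_n(L)$. The main obstacle I anticipate is controlling $\sigma_1$ in the limit $g \to g^*$: since $g^*$ is only Lipschitz, smooth perturbation theory for Steklov eigenvalues does not apply directly and one must carefully track both numerator and denominator of the Rayleigh quotient along the smoothing, keeping the boundary integrals bounded away from zero. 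A secondary subtlety is ruling out the higher-$\ell$ modes as the minimizer for $g^*$, which should follow from monotonicity in $\ell$ but deserves a careful argument because the two fundamental radial solutions $(1+r)^\ell$ and $(1+r)^{-(\ell + n - 2)}$ have such disparate asymptotics.
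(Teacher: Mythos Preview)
Your overall strategy---identify $g^*$ with the glued annulus, recognize the two candidate values as the Steklov--Dirichlet eigenvalue $\sigma_0^D(A_{1+L/2})$ and the Steklov--Neumann eigenvalue $\sigma_1^N(A_{1+L/2})$, and derive the strict upper bound from the Rayleigh-quotient monotonicity underlying Theorem~\ref{thm : pricipal}---matches the paper. The paper phrases the upper bound slightly differently (it extends the mixed-problem eigenfunctions $\varphi_0$, $\phi_1$ to test functions on $(M,g)$ and uses $h<h^*$ directly, rather than first computing $\sigma_1(M,g^*)$), but the content is the same.

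The genuine gap is in your sharpness step. Inserting the eigenfunctions of $g^*$ as test functions into the Rayleigh quotient for $g_\epsilon$ gives an \emph{upper} bound for $\sigma_1(M,g_\epsilon)$, not a lower bound: the variational characterization only ever produces $\sigma_1 \le R(f)$. What you need is the reverse inequality $\sigma_1(M,g_\epsilon) > B_n(L)-\epsilon$, and no choice of test function will deliver this. The paper's fix is to reverse the roles: take an actual Steklov eigenfunction $f_1$ of $(M,g_\epsilon)$, compare its Rayleigh quotient on the annulus metric with its Rayleigh quotient on $g_\epsilon$ (the difference is $<\epsilon$ because $h_\epsilon$ and $h^*$ agree except on a $\delta$-neighborhood of $L/2$, and one can use $\sigma_1(M,g_\epsilon)<B_n(L)$ to close the estimate), and then observe---using separation of variables and the symmetry of $g_\epsilon$---that the restriction $f_1|_{[0,L/2]\times\S^{n-1}}$ is admissible as a test function for either $\sigma_0^D(A_{1+L/2})$ or $\sigma_1^N(A_{1+L/2})$. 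This yields $B_n(L)\le R_{A_{1+L/2}}(f_1) < \sigma_1(M,g_\epsilon)+\epsilon$. The case split (Dirichlet vs.\ Neumann) according to whether $f_1$ involves $S_0$ or $S_1$ is exactly the point where your parity analysis reappears, but now on the $g_\epsilon$-eigenfunction rather than on the $g^*$-eigenfunction.
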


We have the following asymptotic behaviour:
\begin{align*}
    B_n(L) & \underset{L \to \infty}{\longrightarrow} n-2 \\
    B_n(L)  & \underset{L \to 0}{\longrightarrow } 0,
\end{align*}
see Fig. \ref{fig : borne}.
\medskip

We also study the function $L \longmapsto B_n(L)$. This allows us to find a sharp upper bound $B_n$ such that for all meridian lengths $L>0$ and  metrics of revolution $g$ on $M$, we have $\sigma_1(M,g) < B_n$:

\begin{corollaire} \label{cor : corollaire du principal deux}
Let $n \ge 3$. 
 Then there exists a bound $B_n < \infty$ such that for all hypersurfaces of revolution $(M, g)$  in Euclidean space with two boundary components each isometric to  $\S^{n-1}$, we have
\begin{align*}
    \sigma_1(M, g) < B_n := \frac{(n-2)\left(1+\frac{L_1}{2}\right)^{n-2}}{\left(1+\frac{L_1}{2}\right)^{n-2}-1},
\end{align*}  
    where $L_1$ is the unique real positive solution of the equation
\begin{align*}
    \left(1+L/2\right)^{2n-2}-(n-1)\left(1+L/2\right)^n-(n-1)^2\left(1+L/2\right)^{n-2}+n-1=0.
\end{align*}
Moreover, this bound is sharp: for each $\epsilon >0$, there exists a hypersurface of revolution  with two boundary components each isometric to a unit sphere $(M_\epsilon, g_\epsilon)$  such that $\sigma_1(M_\epsilon, g_\epsilon) > B_n - \epsilon$.
\end{corollaire}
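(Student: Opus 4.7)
The plan is to maximize the function $L \longmapsto B_n(L)$ from \Cref{thm : principal deux} over all $L>0$: since $\sigma_1(M,g) < B_n(L)$ for any hypersurface of revolution $(M,g)$ with meridian length $L$, the supremum $B_n := \sup_{L>0} B_n(L)$ (shown to be attained) will be a sharp upper bound independent of $L$.

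First I would introduce the convenient variable $x = 1 + L/2 \in (1, \infty)$ and write
\begin{align*}
    f_1(x) = \frac{(n-2)x^{n-2}}{x^{n-2}-1}, \qquad f_2(x) = \frac{(n-1)(x^n-1)}{x^n+n-1},
\end{align*}
so that $B_n(L) = \min\{f_1(x), f_2(x)\}$. A short computation using $f_1(x) = (n-2)\bigl(1 + (x^{n-2}-1)^{-1}\bigr)$ shows $f_1$ is strictly decreasing from $+\infty$ (as $x \to 1^+$) to $n-2$ (as $x \to \infty$); differentiating $f_2$ yields $f_2'(x) = \frac{n^2(n-1)x^{n-1}}{(x^n+n-1)^2} > 0$, so $f_2$ is strictly increasing from $0$ (as $x \to 1^+$) to $n-1$ (as $x \to \infty$). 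By continuity and monotonicity, the equation $f_1(x) = f_2(x)$ admits a unique solution $x_1 > 1$, and $B_n(L) = f_2(x)$ for $x \le x_1$, $B_n(L) = f_1(x)$ for $x \ge x_1$. Since $f_2$ increases up to $x_1$ and $f_1$ decreases from $x_1$, the maximum of $B_n(L)$ is attained precisely at $L_1 := 2(x_1 - 1)$ and equals $B_n = f_1(x_1) = f_2(x_1)$.

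Next I would derive the polynomial equation characterizing $x_1$. Starting from $f_1(x_1) = f_2(x_1)$, cross-multiplying gives
\begin{align*}
    (n-2)x_1^{n-2}(x_1^n + n-1) = (n-1)(x_1^n-1)(x_1^{n-2}-1),
\end{align*}
and after expanding both sides and collecting terms, the $x_1^{2n-2}$ coefficient reduces to $1$, producing
\begin{align*}
    x_1^{2n-2} - (n-1)x_1^n - (n-1)^2 x_1^{n-2} + (n-1) = 0,
\end{align*}
which is exactly the equation in the statement after substituting $x_1 = 1 + L_1/2$. Uniqueness of the positive solution $L_1$ follows from the monotonicity analysis above (a single crossing of $f_1$ and $f_2$ on $(1,\infty)$).

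Finally, for sharpness, I fix $\epsilon > 0$ and apply \Cref{thm : principal deux} at the meridian length $L_1$: it produces a metric of revolution $g_\epsilon$ on $M_\epsilon := [0, L_1] \times \S^{n-1}$ with $\sigma_1(M_\epsilon, g_\epsilon) > B_n(L_1) - \epsilon = B_n - \epsilon$. Conversely, for any hypersurface of revolution $(M,g)$ of meridian length $L$, \Cref{thm : principal deux} gives $\sigma_1(M,g) < B_n(L) \le B_n$, completing the proof. The only mildly delicate point is the monotonicity of $f_1$ and $f_2$ and the check that the two expressions in the definition of $B_n(L)$ do cross exactly once; once that structural fact is established, the rest is algebraic bookkeeping.
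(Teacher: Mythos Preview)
Your proof is correct and follows essentially the same strategy as the paper: maximize $L\mapsto B_n(L)$ by showing the Dirichlet branch is strictly decreasing and the Neumann branch strictly increasing, so they cross exactly once at $L_1$, and then invoke the sharpness part of \Cref{thm : principal deux} at $L=L_1$. The only noteworthy difference is how monotonicity is established: the paper uses variational comparison (extending a Dirichlet eigenfunction by zero, restricting a Neumann eigenfunction), whereas you differentiate the explicit formulas directly; your route is a bit more elementary here since the closed forms are already available, and you additionally carry out the algebra showing that $f_1(x_1)=f_2(x_1)$ is equivalent to the stated polynomial equation, which the paper leaves implicit.
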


We say that $L_1$ is a \textit{critical length} associated with $k=1$, see Definition \ref{def : critical length}.

\begin{prop} \label{rem : dimension sur L1}
Let $n \ge 3$, and let $L_1 = L_1(n)$ be the critical length associated with $k =1$. Then we have: 
\begin{align*}
    \lim_{n\to \infty} L_1(n) = 0
\; \mbox{ and } \;
     \lim_{n\to \infty} B_n = \infty.
\end{align*}
\end{prop}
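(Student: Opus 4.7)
The plan is to introduce the substitution $x := 1 + L_1/2$, which lies in $(1,\infty)$, and rewrite the defining equation
\begin{align*}
    x^{2n-2} - (n-1)x^n - (n-1)^2 x^{n-2} + (n-1) = 0.
\end{align*}
Dividing through by $x^{n-2}$ (permissible since $x > 0$), the equation takes the more tractable form
\begin{align*}
    x^n = (n-1)\,x^2 + (n-1)^2 - \frac{n-1}{x^{n-2}}.
\end{align*}
The two assertions will be extracted from this rewriting.

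For the statement $L_1(n) \to 0$, I would exploit the positivity of the last term: since $x > 1$, the subtracted quantity $(n-1)/x^{n-2}$ is strictly positive, so
\begin{align*}
    x^n \,<\, (n-1)x^2 + (n-1)^2.
\end{align*}
Dividing by $x^2$ and then using $x > 1$ again, this gives $x^{n-2} < (n-1) + (n-1)^2/x^2 < n(n-1)$, so
\begin{align*}
    1 \,<\, 1+L_1/2 \,<\, \bigl(n(n-1)\bigr)^{1/(n-2)}.
\end{align*}
The right-hand side equals $\exp\bigl(\ln(n(n-1))/(n-2)\bigr)$ and converges to $1$ as $n \to \infty$, so a squeeze argument gives $L_1(n) \to 0$.

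For the statement $B_n \to \infty$, the plan is to bypass the asymptotics of $L_1$ altogether and instead use the structural observation that the first expression in the $\min$ defining $B_n(L)$ is bounded below by $n-2$ uniformly in $L$. Indeed, writing $y = x^{n-2} > 1$,
\begin{align*}
    B_n \,=\, \frac{(n-2)\,y}{y - 1} \,=\, (n-2)\cdot\frac{1}{1 - 1/y} \,>\, n-2,
\end{align*}
so $B_n > n - 2 \to \infty$. (As a sanity check, the other branch $C_n(L) = (n-1)(x^n-1)/(x^n + n - 1)$ is bounded above by $n-1$, confirming $n - 2 < B_n < n - 1$, i.e.\ $B_n \sim n$.)

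There is no substantive obstacle: both statements reduce to elementary manipulations of the defining equation. The only calculation requiring a moment's care is extracting a polynomial-in-$n$ bound on $x^{n-2}$ from the transcendental-looking equation; once the bound $x^{n-2} < n(n-1)$ is in hand, taking the $(n-2)$-th root trivially forces $x \to 1$, and the lower bound $B_n > n-2$ is immediate from the explicit form of $B_n$. No finer asymptotic control of $L_1(n)$ is needed.
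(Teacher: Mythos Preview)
Your proof is correct and follows essentially the same route as the paper: both substitute $x=1+L_1/2$, extract the key inequality $x^n<(n-1)x^2+(n-1)^2$ from the defining equation, deduce a polynomial-in-$n$ bound on $x^{n-2}$ (you obtain $x^{n-2}<n(n-1)$, the paper the slightly weaker $x^{n-2}<(n-1)^3$), and conclude $x\to 1$; for $B_n\to\infty$ both simply observe $B_n>n-2$ from the explicit formula.
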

Note that the behaviour of $L_1$ is surprising since we know  that when $n$ is fixed, then $L \ll 1$ implies  $\sigma_1(M,g) \ll 1$. Indeed, by \parencite[Prop. 3.3]{CGG}, we have
\begin{align*}
    \sigma_1(M) \le \left(1+ \frac{L}{2} \right)^{n-1} \sigma_1(C_L) \underset{L \to 0}{\longrightarrow} 0.
\end{align*}

Now that we have provided information about sharp upper bounds for $\sigma_1(M, g)$, it is natural to wonder what kind of stability properties the hypersurfaces of revolution possess. A first interesting question is the following:
\begin{center}
    \textit{Given the information that $\sigma_1(M=[0, L] \times \S^{n-1}, g)$ is close to the sharp upper bound $B_n$, can we conclude that the meridian length $L$ of $M$ is close to the critical length $L_1$?}
\end{center}
The answer to this question is positive. Indeed we will prove that if $L$ is not close to $L_1$, then $\sigma_1(M, g)$ is not close to $B_n$. Additionally, given the information that $\sigma_1(M, g)$ is $\delta$-close to $B_n$, we will show that the distance between $L$ and $L_1$ is less than $\delta$, up to a constant of proportionality which depends only on the dimension $n$.

\begin{thm} \label{thm : stability 1}
Let $M=[0, L] \times \S^{n-1}$,  with $L >0$ and $n \ge 3$. We suppose $L \ne L_1$. Then there exists a constant $C(n, L) >0$ such that for all metrics of revolution $g$ on $M$, we have 
\begin{align*}
    B_n - \sigma_1(M, g) \ge C(n, L).
\end{align*}
Moreover, there exists a constant $C(n) > 0$ such that for all $0 < \delta < \frac{B_n - (n-2)}{2}$, we have 
\begin{align*}
    |B_n - \sigma_1(M, g) | < \delta \implies |L_1 -L| < C(n) \cdot \delta.
\end{align*}
\end{thm}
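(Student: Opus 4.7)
\emph{Proof sketch.} I will deduce both statements from the structure of the map $L \mapsto B_n(L)$, combined with the sharp upper bound $\sigma_1(M,g) < B_n(L)$ of Theorem~\ref{thm : principal deux}. Writing $B_n(L) = \min\{f_1(L), f_2(L)\}$ with
\[
f_1(L) = \frac{(n-2)(1+L/2)^{n-2}}{(1+L/2)^{n-2}-1}, \qquad f_2(L) = \frac{(n-1)\bigl((1+L/2)^n - 1\bigr)}{(1+L/2)^n + n-1},
\]
a direct calculation will show that $f_1$ is smooth and strictly decreasing from $+\infty$ to $n-2$ on $(0,\infty)$, while $f_2$ is smooth and strictly increasing from $0$ to $n-1$. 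Consequently the graphs cross at exactly one point, which is $L_1$, and $B_n(L) < B_n$ for every $L \ne L_1$. The first assertion follows at once: given $L \ne L_1$ I simply set $C(n,L) := B_n - B_n(L) > 0$, and Theorem~\ref{thm : principal deux} then yields $\sigma_1(M,g) < B_n(L) = B_n - C(n,L)$ for every metric of revolution $g$ on $M$.

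For the quantitative second assertion, I first note that $\sigma_1(M,g) < B_n(L) \le B_n$, so $|B_n - \sigma_1(M,g)| = B_n - \sigma_1(M,g)$, whence $B_n - B_n(L) < \delta$. The hypothesis $\delta < \tfrac{B_n - (n-2)}{2}$ then forces $B_n(L) > \tfrac{B_n + (n-2)}{2}$, a fixed threshold depending only on $n$. Combined with the asymptotics $f_2(L) \to 0$ as $L \to 0^+$ and $f_1(L) \to n-2$ as $L \to +\infty$, this confines $L$ to a compact subinterval $[L_-(n), L_+(n)] \subset (0,\infty)$ that contains $L_1$ and depends only on the dimension. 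On this interval, $|f_1'|$ and $f_2'$ are continuous and strictly positive, and therefore admit a common positive lower bound $m(n)$; applying the mean value theorem to $f_1$ on $[L_1, L]$ when $L \ge L_1$ (respectively to $f_2$ on $[L, L_1]$ when $L \le L_1$) gives $B_n - B_n(L) \ge m(n)\,|L - L_1|$, from which $|L - L_1| < (1/m(n))\,\delta =: C(n)\,\delta$ will follow.

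The one delicate point is to ensure that the compact interval $[L_-(n), L_+(n)]$ and the derivative lower bound $m(n)$ depend on the dimension alone; this is precisely what the hypothesis $\delta < \tfrac{B_n - (n-2)}{2}$ buys, keeping $L$ uniformly away from the degenerate regimes $L \to 0$ and $L \to \infty$ in which the linear lower bound would collapse. Everything else reduces to elementary one-variable calculus on the explicit functions $f_1, f_2$.
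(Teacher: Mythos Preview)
Your argument is correct and follows the same overall strategy as the paper: the first assertion is handled identically by setting $C(n,L)=B_n-B_n(L)$, and for the second you both reduce to $B_n-B_n(L)<\delta$ and treat the two branches $L\gtrless L_1$ separately using the monotone pieces $f_1,f_2$.

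Where you diverge is in the execution of the quantitative step. The paper inverts the explicit formulas: for $L>L_1$ it solves $f_1(R_\delta)=B_n-\delta$ in closed form, then bounds $R_\delta-R_1$ algebraically via the factorisation $x^{n-2}-y^{n-2}=(x-y)\sum x^iy^{n-3-i}$, obtaining an explicit constant $C_1(n)$ (and similarly $C_2(n)$ on the other branch). Your route instead uses the hypothesis $\delta<\tfrac{B_n-(n-2)}{2}$ to trap $L$ in a compact interval depending only on $n$, and then invokes the mean value theorem with a uniform positive lower bound on $|f_1'|,f_2'$ there. Both are valid; the paper's computation produces concrete constants one can actually write down, while your compactness/MVT argument is shorter and more conceptual but leaves $C(n)$ implicit. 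One small point: you assert the strict monotonicity of $f_1,f_2$ by ``direct calculation''; the paper establishes it instead by a variational comparison (extending or restricting eigenfunctions between annuli), though of course differentiating the explicit expressions works equally well.
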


We also consider the following question about stability properties:
\begin{center}
    \textit{Given the information that $\sigma_1(M, g)$ is close to the sharp upper bound $B_n(L)$, can we conclude that the metric of revolution $g$ is close (in a sense that is defined below) to the degenerated maximizing metric $g^*$?}
\end{center}
We prove that if $g$ is not close to $g^*$, then $\sigma_1(M,g)$ is not close to $B_n(L)$.
\medskip

For this purpose, given  $m \in [1, 1+L/2)$, we define 
\begin{align*}
    \mathcal{M}_m :=  & \{\mbox{metrics of revolution } g \mbox{ on } M \\
       & \mbox{ induced by a function } h  \mbox{ such that } \max_{r \in [0, L]} \{h(r)\} \le m \}.
\end{align*}
The collection $\mathcal{M}_m$ can be thought of the set of all metrics of revolution that are not close to the degenerated maximizing metric $g^*$, where the qualitative appreciation of the word "close" is given by the parameter $m$. The larger $m$ is, the closer to $g^*$ the metrics in $\mathcal{M}_m$ can be.
\medskip

We get the following result:

\begin{thm} \label{thm : stability k}
Let $(M=[0, L] \times \S^{n-1}, g)$ be a hypersurface of revolution in Euclidean space with two boundary components each isometric to  $\S^{n-1}$ and dimension $n \ge 3$. Let $m \in [1, 1+L/2)$ and $\mathcal{M}_m$ as above. Then there exists a constant $C(n, L, m) >0$ such that for all $g \in \mathcal{M}_m$, we have 
\begin{align*}
    B_n(L)- \sigma_1(M, g) \ge C(n, L, m).
\end{align*}
\end{thm}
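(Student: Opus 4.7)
The strategy is to combine the spherical-harmonic decomposition of the Steklov problem with a monotonicity-in-$h$ principle. Writing $\sigma_{l, 1}(M, g)$ for the smallest Steklov eigenvalue in the $l$-th angular sector of $M = [0, L] \times \S^{n - 1}$, one has $\sigma_1(M, g) \le \min\{\sigma_{0, 1}(M, g),\, \sigma_{1, 1}(M, g)\}$. The formula in \Cref{thm : principal deux} strongly suggests that the two terms of the minimum defining $B_n(L)$ are precisely $\sigma_{0, 1}$ and $\sigma_{1, 1}$ evaluated at the degenerate metric $g^*$ induced by $h^*(r) := 1 + \min(r, L - r)$. The plan is to show that both $\sigma_{l, 1}(M, g)$, $l \in \{0, 1\}$, stay strictly below their $g^*$-values as $g$ ranges over $\mathcal{M}_m$, with a quantitative gap depending only on $n$, $L$, $m$.

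\textbf{Domination and monotonicity.} The admissibility constraints $|h'| \le 1$ and $h(0) = h(L) = 1$ force $h \le h^*$ pointwise, and $\max h \le m$ adds $h \le h^*_m := \min\{h^*, m\}$. Let $g^*_m$ denote the (Lipschitz) warped metric with profile $h^*_m$. Explicit solution of the radial harmonic ODE yields
\[
\sigma_{0, 1}(M, g) \;=\; \frac{2}{\displaystyle\int_0^L h(r)^{-(n - 1)} \, dr},
\]
which is visibly nondecreasing in $h$. For the $l = 1$ sector the Rayleigh quotient
\[
R_1(u;\, h) \;=\; \frac{\displaystyle\int_0^L \bigl( h^{n - 1}(u')^2 + (n - 1)\, h^{n - 3} u^2 \bigr) \, dr}{u(0)^2 + u(L)^2}
\]
has (for $n \ge 3$) both integrand terms pointwise monotone in $h$; hence $\sigma_{1, 1}(M, g) = \inf_u R_1(u;\, h)$ is also nondecreasing in $h$. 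Combined with $h \le h^*_m$, this gives $\sigma_{l, 1}(M, g) \le \sigma_{l, 1}(M, g^*_m)$ for $l \in \{0, 1\}$.

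\textbf{Strict gap.} The hypothesis $m < 1 + L/2$ ensures $h^*_m < h^*$ on the nonempty open interval $(m - 1,\, L - (m - 1))$. The closed form above then gives strictly $\sigma_{0, 1}(g^*_m) < \sigma_{0, 1}(g^*)$, while using the first $l = 1$ eigenfunction $u^*$ of $g^*$ as a test function on $g^*_m$ yields $\sigma_{1, 1}(g^*_m) \le R_1(u^*;\, h^*_m) < R_1(u^*;\, h^*) = \sigma_{1, 1}(g^*)$. Since $\min\{\alpha', \beta'\} < \min\{\alpha, \beta\}$ whenever $\alpha' < \alpha$ and $\beta' < \beta$, setting $C(n, L, m) := B_n(L) - \min\{\sigma_{0, 1}(g^*_m),\, \sigma_{1, 1}(g^*_m)\}$ produces a positive gap and completes the proof.

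\textbf{Main obstacle.} The delicate point is the strict inequality in the $l = 1$ sector, which requires $u^*$ not to vanish identically on $(m - 1,\, L - (m - 1))$. Since $(M, g^*)$ is isometric to two Euclidean annuli glued along their common outer boundary $\{r = L/2\}$, $u^*$ is piecewise an explicit combination of radial monomials, so this non-vanishing reduces to a direct calculation. The matching condition at the non-smooth junction $r = L/2$, together with justification that the Lipschitz metric $g^*_m$ admits a well-posed Steklov Rayleigh quotient, is the point requiring the most care in executing the argument.
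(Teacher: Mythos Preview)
Your proposal is correct and shares the paper's core idea: dominate the given profile $h$ by the ``capped'' profile $h^*_m(r)=\min\{1+\min(r,L-r),\,m\}$ and use monotonicity of the Rayleigh quotient in $h$ (valid since $n\ge 3$) to push $\sigma_1(M,g)$ below a fixed quantity depending only on $(n,L,m)$.

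The execution, however, is genuinely different. The paper never works in individual angular sectors and never touches the Lipschitz metric $g^*_m$ spectrally. Instead it smooths $h^*_m$ from above to a $C^\infty$ profile $\tilde h_m\ge h^*_m$, obtaining a bona fide metric of revolution $\tilde g_m$; monotonicity then gives $\sigma_1(M,g)\le\sigma_1(M,g_m^*)\le\sigma_1(M,\tilde g_m)$, and the strict gap $C(n,L,m):=B_n(L)-\sigma_1(M,\tilde g_m)>0$ follows immediately from the already-proven \Cref{thm : principal deux}. This sidesteps precisely the two points you flag as obstacles (well-posedness of the Steklov problem on the non-smooth $g^*_m$, and non-vanishing of $u^*$ on the plateau). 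Your route is more hands-on and in principle yields a more explicit constant (e.g.\ the $l=0$ gap is a closed-form integral), but you pay for it by having to justify the Lipschitz spectral problem and the eigenfunction non-vanishing; the paper's smoothing trick trades that work for a one-line citation.
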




These results solve the case $k=1$.  Therefore, it would be interesting to find the same kind of results for any $k \ge 1$. After having calculated sharp upper bounds for some higher values of $k$ in Sect. \ref{subsect : sigma2 sigmam1} and \ref{subsect : sigmam1plus1}, we will see that in order to get an expression for $B_n^k(L)$, we need to distinguish between many cases. As such, giving a general formula for $B_n^k(L)$ or $B_n^k := \sup_{L \in \R_+^*} \{B_n^k(L)\}$ via this method seems difficult. We discuss this in \Cref{rem : borne sans L}.

\begin{defn} \label{def : critical length}
We say that $L_k \in \R_+^*$ is a finite critical length associated with $k$ if we have $B_n^k = B_n^k(L_k)$. We say that $k$ has a critical length at infinity if it satisfies $B_n^k= \lim_{L\to \infty} B_n^k(L)$.
\end{defn}

These lengths are critical in the following sense: if $L_k \in \R_+^*$ is a finite critical length for a certain $k \in \N$ and if we write $g^*$ the degenerated maximizing metric on $M_k=[0, L_k]\times \S^{n-1}$, then 
\begin{align*}
    B_n^k= \sigma_k(M_k,g^*).
\end{align*}

Given $n \ge 3$, there exist some $k$ which have  a finite critical length associated with them. Indeed, thanks to \Cref{cor : corollaire du principal deux}, we know that $k=1$ has this property. Moreover, we know that there exist some $k$ which have a critical length at infinity, see Sect. \ref{subsect : sigma2 sigmam1}.
\medskip

 Since we want to study upper bounds for the Steklov eigenvalues, it is then natural to ask what qualitative and quantitative information  we can provide about these critical lengths.
\medskip

We get the following result:
\begin{thm} \label{thm : principal 4}
Let $n \ge 3$. Then there exist infinitely many $k \in \N$ which have a finite critical length associated with them. 
Moreover, if we call  $(k_i)_{i=1}^\infty \subset \N$ the increasing sequence of such $k$ and if we call $(L_i)_{i=1}^\infty$ the associated sequence of finite critical lengths, then we have 
\begin{align*}
    \lim_{i\to \infty }L_i=0.
\end{align*}
\end{thm}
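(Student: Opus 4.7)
The plan is to leverage the explicit description of the degenerated maximizing metric $g^*_L$ sketched in Section \ref{sect : proof} (``two annuli glued together'') and to exhibit infinitely many critical lengths arising from spectral crossings on this model.

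First I would compute the Steklov spectrum of $(M,g^*_L)$ by separation of variables. Since each annulus is flat, each spherical harmonic mode of degree $\ell$ reduces to Euler's equation on the radial coordinate, and decomposing into functions that are symmetric or antisymmetric under the gluing reflection yields two families of eigenvalues, each with multiplicity $d_\ell$ equal to the dimension of the space of spherical harmonics of degree $\ell$ on $\S^{n-1}$, namely
\begin{align*}
\sigma^e_\ell(L) = \frac{\ell(\ell+n-2)(R^{2\ell+n-2}-1)}{(\ell+n-2)+\ell R^{2\ell+n-2}}, \qquad \sigma^o_\ell(L) = \frac{(\ell+n-2)R^{2\ell+n-2}+\ell}{R^{2\ell+n-2}-1},
\end{align*}
with $R = 1+L/2$. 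The two bounds appearing in Theorem \ref{thm : principal deux} are precisely $\sigma^o_0$ and $\sigma^e_1$. Differentiating shows that $L \mapsto \sigma^e_\ell(L)$ is strictly increasing from $0$ to $\ell+n-2$, while $L \mapsto \sigma^o_\ell(L)$ is strictly decreasing from $+\infty$ to $\ell+n-2$.

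Next, for each $\ell \ge 2$, the decreasing curve $\sigma^o_0$ and the increasing curve $\sigma^e_\ell$ cross at a unique length $\Lambda_\ell$, with $\Lambda_\ell$ strictly decreasing in $\ell$ (since $\sigma^e_{\ell+1} > \sigma^e_\ell$ pointwise) and $\Lambda_\ell \to 0$; indeed, the small-$L$ asymptotics $\sigma^o_0(L) \sim 2/L$ and $\sigma^e_\ell(L) \sim \ell(\ell+n-2)L/2$ give $\Lambda_\ell \sim 2/\sqrt{\ell(\ell+n-2)}$ with common value $\sigma^o_0(\Lambda_\ell) \sim \sqrt{\ell(\ell+n-2)}$. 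Setting $k_\ell = 1 + \sum_{j=1}^{\ell-1} d_j$, an inspection of the ordering of eigenvalues just below versus just above $L=\Lambda_\ell$ shows that $\sigma_{k_\ell}(M,g^*_L)$ transitions at $\Lambda_\ell$ from the increasing branch $\sigma^e_\ell(L)$ to the decreasing branch $\sigma^o_0(L)$. This produces a local maximum of $B_n^{k_\ell}(L) = \sigma_{k_\ell}(M,g^*_L)$ at $L=\Lambda_\ell$.

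The final step is to promote this local maximum to a global one, so that $\Lambda_\ell$ is the finite critical length $L_{k_\ell}$ of Definition \ref{def : critical length}. To do so I would compare the peak value $B_n^{k_\ell}(\Lambda_\ell) \sim \sqrt{\ell(\ell+n-2)}$ with the asymptotic value $\lim_{L\to\infty} B_n^{k_\ell}(L)$: a Weyl-type count using $k_\ell \sim c_n\, \ell^{n-1}$ together with the multiplicity $2d_{\ell'}$ of each level $\ell'+n-2$ in the limiting spectrum locates this limit near $2^{-1/(n-1)}\ell + n - 2$, strictly smaller than $\sqrt{\ell(\ell+n-2)}$ for $\ell$ large and $n \ge 3$. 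Combined with analogous control of the intermediate peaks at $\Lambda_{\ell-1},\Lambda_{\ell-2},\ldots$ (whose heights decrease roughly like $\ell-1,\ell-2,\ldots$), this proves $L_{k_\ell} = \Lambda_\ell$ is finite for all sufficiently large $\ell$, and $L_{k_\ell} \to 0$; reindexing in the natural order yields the sequences $(k_i)$, $(L_i)$ claimed in the theorem.

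The main obstacle will be the bookkeeping in this last step: one must verify that none of the other descending curves $\sigma^o_{\ell'}$ with $\ell'\ge 1$, which also sweep from $+\infty$ down to $\ell'+n-2$, enter position $k_\ell$ at some larger $L$ and lift $B_n^{k_\ell}(L)$ back above $\sqrt{\ell(\ell+n-2)}$. Controlling this requires precise asymptotics of all $\sigma^o_{\ell'}$ in the small-$L$ regime and tracking the full succession of crossings in the ordered spectrum; the gap between $\sqrt{\ell(\ell+n-2)}$ and $2^{-1/(n-1)}\ell$ should provide enough room for the argument to go through.
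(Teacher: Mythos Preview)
Your computation of the spectrum of $(M,g^*_L)$ is correct, and your identification of the crossings $\Lambda_\ell$ between $\sigma^o_0$ and $\sigma^e_\ell$ is fine. The local-maximum claim at $L=\Lambda_\ell$ for your index $k_\ell = 1+\sum_{j=1}^{\ell-1} d_j$ also checks out. But the step you yourself flag as the ``main obstacle'' --- promoting this local maximum to a global one by controlling every subsequent crossing with the curves $\sigma^o_{\ell'}$, $\ell'\ge 1$ --- is genuinely hard and is not carried out in your proposal; the phrase ``should provide enough room'' is a hope, not an argument. Without that step you cannot conclude $L_{k_\ell}=\Lambda_\ell$, and hence you cannot deduce $L_i\to 0$ along your sequence.

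The paper sidesteps this difficulty entirely by choosing a \emph{different} family of indices and a much weaker criterion. Rather than locating the maximum, it picks $k$ in the range $m_0+\sum_{j=1}^{i-1}2m_j+m_i < k \le m_0+\sum_{j=1}^{i}2m_j$, i.e.\ the positions which, in the large-$L$ limit, land in the Dirichlet block of degree $i$. One then checks directly that for all $L$ beyond the crossing $L_i^*$ of $\sigma^D_{(i)}$ with $\sigma^N_{(i+1)}$, the $k$th eigenvalue of $(M,g^*_L)$ equals $\sigma^D_{(i)}(A_{1+L/2})$, which is strictly decreasing in $L$. That alone forces the supremum of $B_n^k(L)$ to be attained at some finite $L\le L_i^*$; there is no need to know its value or to compare peaks. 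The convergence $L_i\to 0$ then reduces to the elementary estimate $L_i^*\to 0$, obtained from the equation $\sigma^D_{(i)}=\sigma^N_{(i+1)}$ by the same logarithmic trick used for $L_1(n)$ in Proposition~\ref{rem : dimension sur L1}. In short: instead of proving a crossing is the global maximum, prove the curve is eventually monotone --- this makes the bookkeeping you were worried about disappear.
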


The existence of finite critical lengths is something surprising when we compare with what happens in the case of hypersurfaces of revolution with one boundary component. Indeed, using our vocabulary, we can state that in the case of hypersurfaces of revolution with one boundary component, each $k \in \N$ has a critical length at infinity, see \parencite[Prop. 7]{CV}. Nevertheless, in our case, Theorem \ref{thm : principal 4} guarantees that there exist infinitely many $k \in \N$ which have a finite critical length associated with them. Moreover,  we will show in  Sect. \ref{subsect : sigma2 sigmam1} that there exist some $k$ which have a critical length at infinity. However, we do not know if there are \textit{infinitely many} of them. This consideration leads to the following open question (Question  \ref{q : open}):
\begin{center}
    \textit{Given $n \ge 3$, are there finitely or infinitely many $k \in \N$ such that $k$ has a critical length at infinity?} 
\end{center}

\textbf{Plan of the paper.}
In Sect. \ref{sect : var car}, we recall  the variational characterizations of the Steklov eigenvalues before giving the expression of eigenfunctions on hypersurfaces of revolution, and we introduce the notion of mixed Steklov-Dirichlet and Steklov-Neumann problems and state some propositions about them. We will then have enough information to prove Theorem \ref{thm : pricipal} in Sect. \ref{sect : proof}.  This will allow us to prove Theorem \ref{thm : principal deux}, Corollary \ref{cor : corollaire du principal deux} and \Cref{rem : dimension sur L1} in Sect. \ref{sect : proof 2}. Then we prove the stability properties of hypersurfaces of revolution, i.e Theorem \ref{thm : stability 1} and Theorem \ref{thm : stability k} in Sect. \ref{sect : stability}. We continue by performing some calculation for sharp upper bounds for higher eigenvalues in Sect. \ref{sect : 6}.  We conclude by  proving Theorem \ref{thm : principal 4} in Sect. \ref{sect : proof infinity de k avec L fini}. 
\medskip

\textbf{Acknowledgment.} I would like to warmly thank my thesis supervisor Bruno Colbois for offering me the opportunity to work on this topic, and for his precious help which enabled me to solve the difficulties encountered. I also want to thank several of  my friends and colleagues, Antoine Bourquin, Laura Grave De Peralta and Flavio Salizzoni,  for various discussions we had, their help and advice. Moreover, I would like to thank the anonymous referee for their careful proofreading and their comments which have improved the paper greatly.

\section{Variational characterization of the Steklov eigenvalues and mixed problems} \label{sect : var car}

We state some general facts about Steklov eigenfunctions and define the mixed Steklov-Dirichlet and Steklov-Neumann problems.

\subsection{Variational characterization of the Steklov eigenvalues} 

Let $(M, g)$ be a Riemannian manifold with smooth boundary $\Sigma$. Then we can characterize the $k$th Steklov eigenvalue of $M$ by the following formula:
\begin{align} \label{form : car var}
    \sigma_k(M, g) = \min \left\lbrace R_g(f)  : f \in H^1(M), \; f \perp_\Sigma f_0, f_1, \ldots, f_{k-1} \right\rbrace,
\end{align}
where 
\begin{align*}
    R_g(f) = \frac{\int_M |\nabla f|^2 dV_g}{\int_\Sigma |f|^2 dV_\Sigma} 
\end{align*}
 is called the Rayleigh quotient  and
\begin{align*}
    f \perp_\Sigma f_i \iff \int_\Sigma f f_i dV_\Sigma =0.
\end{align*}


Another way to characterize the $k$th eigenvalue of $M$ is given by the Min-Max principle:
\begin{align} \label{form : min max}
    \sigma_k(M,g) = \min_{E \in \mathcal{H}_{k+1}(M)} \max_{0 \neq f \in E} R_g(f),
\end{align}
where $\mathcal{H}_{k+1}$ is the set of all $(k+1)$-dimensional subspaces in the Sobolev space $H^1(M)$.

\medskip

We state now a proposition that provides us with information about the expression of the Steklov eigenfunctions of a hypersurface of revolution.  
\medskip

We denote by $0 =\lambda_0 < \lambda_1 \le \lambda_2 \le \ldots \nearrow \infty$ the spectrum of the Laplacian on $(\S^{n-1}, g_0)$ and we consider $(S_j)_{j=0}^\infty$ an orthonormal basis of eigenfunctions associated to $(\lambda_j)_{j=0}^\infty$. 
\begin{prop} \label{prop : sep variables}
Let $(M, g)$ be a hypersurface of revolution  as in \Cref{defn : revolution}. Then each eigenfunction on $M$ can  be written as $f_k(r, p) = u_l(r)S_j(p)$, where  $u_l$ is a smooth function on $[0, L]$.
\end{prop}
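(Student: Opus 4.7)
The plan is to carry out a standard separation of variables argument adapted to this warped-product setting, relying on the fact that the Laplacian and the normal derivative commute with the $O(n)$-action on $M$.

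First, I would recall the expression of the Laplacian for the warped product metric $g = dr^2 + h^2(r) g_0$: for $f \in C^\infty(M)$,
\begin{align*}
\Delta_g f = -\partial_r^2 f - (n-1)\frac{h'(r)}{h(r)} \partial_r f + \frac{1}{h^2(r)} \Delta_{g_0} f,
\end{align*}
where $\Delta_{g_0}$ is the Laplacian on $(\S^{n-1}, g_0)$. Next, using that $(S_j)_{j=0}^\infty$ is a complete orthonormal basis of $L^2(\S^{n-1})$, I would expand an arbitrary smooth eigenfunction $f$ associated to the Steklov eigenvalue $\sigma$ as
\begin{align*}
f(r,p) = \sum_{j=0}^\infty u_j(r) S_j(p), \qquad u_j(r) = \int_{\S^{n-1}} f(r,p)\, S_j(p)\, dV_{g_0}(p).
\end{align*}
Elliptic regularity for the harmonic function $f$ on $M$ guarantees that each coefficient $u_j$ is smooth on $[0,L]$.

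Plugging the expansion into $\Delta_g f = 0$, using $\Delta_{g_0} S_j = \lambda_j S_j$, and invoking the orthogonality of the $S_j$ to project onto each mode, I would obtain the decoupled ordinary differential equations
\begin{align*}
u_j''(r) + (n-1)\frac{h'(r)}{h(r)} u_j'(r) - \frac{\lambda_j}{h^2(r)} u_j(r) = 0, \qquad j = 0, 1, 2, \ldots.
\end{align*}
The boundary $\Sigma = \{0\} \times \S^{n-1} \sqcup \{L\} \times \S^{n-1}$ is preserved by the $O(n)$-action, the outward normals are $\mp \partial_r$, and on each component the spherical basis $(S_j)$ is orthogonal; therefore the Steklov condition $\partial_\nu f = \sigma f$ likewise separates into the boundary conditions $-u_j'(0) = \sigma u_j(0)$ and $u_j'(L) = \sigma u_j(L)$ for every $j$.

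Consequently, each non-zero term $u_j(r) S_j(p)$ in the expansion is itself a Steklov eigenfunction with eigenvalue $\sigma$. Thus the $\sigma$-eigenspace decomposes as an orthogonal direct sum (with respect to the $L^2(\Sigma)$ inner product) of subspaces of the form $\{u(r) S_j(p)\}$, and in particular it admits a basis whose elements are of the separated form $u_l(r) S_j(p)$. The main bookkeeping step — and the only place where one must be slightly careful — is verifying that the projection of the harmonicity equation onto each spherical mode is indeed an equality of smooth functions of $r$; this follows from the uniform absolute convergence of the expansion and its derivatives on compact subsets, guaranteed by the smoothness of $f$ together with the Weyl-type growth of the $\lambda_j$.
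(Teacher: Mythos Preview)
Your argument is correct and is precisely the standard separation-of-variables proof for warped products. The paper itself does not give a proof of this proposition; it simply remarks that the result is well known and refers the reader to several sources (e.g.\ \cite{DHN}, \cite{Esc}, \cite{T2}, \cite{XC}). Your write-up is exactly the kind of argument those references contain, and the ODE you obtain for each mode agrees with the one the paper later quotes from \parencite[Prop.~2]{CV}.
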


This property is well known for warped product manifolds (and thus for our case of hypersurfaces of revolution) and it is used often, see for example  \parencite[Remark 1.1]{DHN},  \parencite[Lemma 3]{Esc}, \parencite[Prop. $3.16$]{T2} or  \parencite[Prop. $9$]{XC}.




\subsection{Mixed problems and their variational characterizations} \label{sect : mixed problems}



Let $(N, \partial N)$ be a smooth compact connected Riemannian manifold and $A \subset N$ be a domain which satisfies $\partial N \subset \partial A$. We suppose that $\partial A$ is smooth and we call $\partial_{int}A$ the intersection of $\partial A$ with the interior of $N$.
\begin{defn}
The Steklov-Dirichlet problem on $A$ is the eigenvalue problem 
\begin{align*}
 \left\{
    \begin{array}{ll}
        \Delta f  =0& \mbox{ in } A  \\
        \partial_\nu f = \sigma f & \mbox{ on } \partial N \\
        f  =0 & \mbox{ on } \partial_{int}A.
    \end{array}
\right.
\end{align*}

\end{defn}

It is well known that this mixed problem possesses solutions that form a discrete sequence
\begin{align*}
    0 < \sigma_0^D(A) \le \sigma_1^D(A) \le \ldots \nearrow \infty.
\end{align*}

The variational characterization of the $k$th Steklov-Dirichlet eigenvalue is the following:
\begin{align*}
    \sigma_k^D(A) = \min_{E \in \mathcal{H}_{k+1,0}(A)} \max_{0 \neq f \in E} \frac{\int_A |\nabla f|^2 dV_A}{\int_\Sigma |f|^2 dV_\Sigma},
\end{align*}
where $\mathcal{H}_{k+1, 0}$ is the set of all $(k+1)$-dimensional subspaces in the Sobolev space 
\begin{align*}
    H_0^1(A)= \{f \in H^1(A) : f=0 \mbox{ on } \partial_{int}A \}.
\end{align*}

\begin{defn}
The Steklov-Neumann problem on $A$ is the eigenvalue problem 
\begin{align*}
 \left\{
    \begin{array}{ll}
        \Delta f  =0& \mbox{ in } A  \\
        \partial_\nu f = \sigma f & \mbox{ on } \partial N \\
        \partial_\nu f = 0 & \mbox{ on } \partial_{int}A.
    \end{array}
\right.
\end{align*}

\end{defn}

It is well known that this mixed problem possesses solutions that form a discrete sequence
\begin{align*}
    0 = \sigma_0^N(A) \le \sigma_1^N(A) \le \ldots \nearrow \infty.
\end{align*}

The variational characterization of the $k$th Steklov-Neumann eigenvalue is the following:
\begin{align*}
    \sigma_k^N(A) = \min_{E \in \mathcal{H}_{k+1}(A)} \max_{0 \neq f \in E} \frac{\int_A |\nabla f|^2 dV_A}{\int_\Sigma |f|^2 dV_\Sigma},
\end{align*}
where $\mathcal{H}_{k+1}$ is the set of all $(k+1)$-dimensional subspaces in the Sobolev space $H^1(A)$.

\subsection{Mixed problems on annular domains}

Let $\B_1$ and $\B_R$ be the balls in $\R^n$, $n \ge 3$,  with radius $1$ and $R >1$ respectively centered at the origin. The annulus $A_R$ is defined as follows: $A_R = \B_R \backslash \overline{\B}_1$. We say that this annulus is of inner radius $1$ and outer radius $R$. This particular kind of domain  shall be useful in this paper.
\medskip

For such domains, it is possible to compute $\sigma_{(k)}^D(A_R)$ explicitly, which is the $(k)$th eigenvalue of the Steklov-Dirichlet problem on $A_R$, counted without multiplicity.
\medskip

We state here Proposition $4$ of \cite{CV}:
\begin{prop} \label{prop : SD}
For $A_R$ as above, consider the Steklov-Dirichlet problem
\begin{align*}
    \left\{
    \begin{array}{ll}
       \Delta f = 0  & \mbox{ in } A_R  \\
        \partial_\nu f = \sigma f & \mbox{ on } \partial \B_1 \\
        f = 0 & \mbox{ on } \partial \B_R.
    \end{array}
    \right.
\end{align*}
Then, for $k \ge 0$, the $(k)$th eigenvalue (counted without multiplicity) of this problem is 
\begin{align*}
    \sigma_{(k)}^D(A_R) = \frac{(k+n-2)R^{2k+n-2}+k}{R^{2k+n-2}-1}.
\end{align*}
\end{prop}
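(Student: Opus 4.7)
I would prove the proposition by separation of variables on the rotationally symmetric domain $A_R$. Write an eigenfunction in the form $f(r,\theta) = u(r)\,S(\theta)$, where $S$ is a spherical harmonic on $\S^{n-1}$ of degree $k \ge 0$ (so $-\Delta_{\S^{n-1}} S = k(k+n-2) S$). The harmonicity condition $\Delta f = 0$ reduces to the Euler-type ODE
\begin{align*}
 u''(r) + \frac{n-1}{r}\, u'(r) - \frac{k(k+n-2)}{r^2}\, u(r) = 0,
\end{align*}
whose general solution is $u(r) = A r^k + B r^{-(k+n-2)}$ (the case $k=0$ reads $u(r) = A + B r^{-(n-2)}$, and is handled by the same subsequent calculation).

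\textbf{Applying the boundary conditions.} The Dirichlet condition $u(R) = 0$ yields immediately $B = -A R^{2k+n-2}$. For the Steklov condition at $r = 1$, I must carefully observe that the outward unit normal to $A_R$ along $\partial \B_1$ points \emph{into} $\B_1$, i.e.\ in the direction $-\partial_r$. Hence $\partial_\nu f = \sigma f$ on $\partial \B_1$ becomes $-u'(1) = \sigma u(1)$. Computing
\begin{align*}
 u(1) &= A + B = A\bigl(1 - R^{2k+n-2}\bigr), \\
 u'(1) &= Ak - B(k+n-2) = A\bigl[k + (k+n-2) R^{2k+n-2}\bigr],
\end{align*}
and substituting into $-u'(1) = \sigma u(1)$ gives, after cancelling $A$, the announced value
\begin{align*}
 \sigma = \frac{k + (k+n-2) R^{2k+n-2}}{R^{2k+n-2} - 1}.
\end{align*}

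\textbf{Completeness and ordering.} To finish, I would invoke completeness of spherical harmonics in $L^2(\S^{n-1})$: every Steklov-Dirichlet eigenfunction decomposes as a series according to spherical-harmonic degree, so the values exhibited above exhaust the spectrum. To justify the notation $\sigma_{(k)}^D$ for \emph{distinct} eigenvalues ordered increasingly, it suffices to verify that the rewriting
\begin{align*}
 \phi(k) \;=\; (k+n-2) \,+\, \frac{2k+n-2}{R^{2k+n-2} - 1}
\end{align*}
of the right-hand side is strictly increasing in $k \ge 0$ when $R > 1$; this is an elementary monotonicity check (the first summand grows by $1$ each step while the second is manifestly decreasing, and a direct comparison shows the net change is positive).

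\textbf{Main obstacle.} The only genuinely subtle point is the sign convention for the outward normal along the inner sphere $\partial \B_1$: because it points away from $A_R$ and hence opposite to $\partial_r$, one picks up a minus sign in the Steklov condition which is essential for obtaining the correct formula. Apart from this, the proof is routine: an Euler ODE, two linear boundary conditions, completeness of spherical harmonics, and a short monotonicity estimate.
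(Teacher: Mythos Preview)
Your proof is correct and follows the standard separation-of-variables approach; the paper itself does not give a proof of this proposition but quotes it verbatim from \cite{CV}, where the same method (harmonic radial-spherical decomposition, Euler ODE, and the two boundary conditions) is used. Your remark that the outward normal on $\partial\B_1$ is $-\partial_r$ is exactly the point one must get right, and your monotonicity rewriting $\phi(k)=(k+n-2)+\frac{2k+n-2}{R^{2k+n-2}-1}$ is the natural way to check that the values are listed in increasing order; the claimed elementary check reduces, after differentiation in $m=2k+n-2$, to the inequality $\tfrac{x-x^{-1}}{2}>\ln x$ for $x=R^m>1$, which is immediate.
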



By \parencite[Prop. 4]{CV}, it is possible to get the expression of the eigenfunctions of the Steklov-Dirichlet problem on an annular domain.

\begin{lemma} \label{lem : Dirichlet}
Each eigenfunction $\varphi_l$ of the Steklov-Dirichlet problem on the annulus $A_{R}$ can be expressed as $\varphi_l(r,p)=\alpha_l(r)S_l(p)$, where $S_l$ is an eigenfunction for the $l^{th}$ harmonic of the sphere $\mathbb{S}^{n-1}$.
\end{lemma}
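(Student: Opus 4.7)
The plan is to mimic the argument behind Proposition \ref{prop : sep variables}, noting that the annulus $A_R$ is isometric to the warped product $([1,R] \times \mathbb{S}^{n-1}, dr^2 + r^2 g_0)$. Write the Laplacian in spherical coordinates as
\begin{align*}
    \Delta = \partial_r^2 + \frac{n-1}{r}\partial_r + \frac{1}{r^2}\Delta_{\mathbb{S}^{n-1}},
\end{align*}
and observe that both boundary operators, namely the Steklov condition on $\partial \mathbb{B}_1$ and the Dirichlet condition on $\partial \mathbb{B}_R$, respect the decomposition of $L^2(A_R)$ into $L^2$-components along the orthonormal Hilbert basis $(S_j)_{j=0}^\infty$ of $L^2(\mathbb{S}^{n-1})$ made of Laplace eigenfunctions with eigenvalues $(\lambda_j)_{j=0}^\infty$.

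First I would fix a Steklov-Dirichlet eigenfunction $\varphi$ with eigenvalue $\sigma$ and expand it as
\begin{align*}
    \varphi(r,p) = \sum_{j \ge 0} \alpha_j(r) S_j(p), \qquad \alpha_j(r) = \int_{\mathbb{S}^{n-1}} \varphi(r,p) S_j(p) \, dV_{g_0}.
\end{align*}
Elliptic regularity for the mixed Steklov-Dirichlet problem guarantees that $\varphi$ is smooth up to $\partial A_R$, which legitimizes differentiating under the sum and gives the smoothness of each $\alpha_j$. Projecting the equations $\Delta \varphi = 0$, $-\partial_r \varphi(1,\cdot) = \sigma \varphi(1,\cdot)$ and $\varphi(R,\cdot) = 0$ onto the $S_j$-component then produces, for each $j$, the ODE boundary value problem
\begin{align*}
    \alpha_j''(r) + \frac{n-1}{r}\alpha_j'(r) - \frac{\lambda_j}{r^2} \alpha_j(r) = 0, \qquad -\alpha_j'(1) = \sigma \alpha_j(1), \qquad \alpha_j(R) = 0.
\end{align*}

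Next I would use this decomposition to conclude that each nonzero term $\alpha_j(r) S_j(p)$ is itself a solution of the Steklov-Dirichlet problem with eigenvalue $\sigma$. Because the Euler-type ODE above can be solved explicitly (its solutions are linear combinations of $r^{\mu_j^{\pm}}$ for the two real exponents determined by $\lambda_j$), the two boundary conditions admit a nontrivial solution only for a discrete set of values of $\sigma$; indeed Proposition \ref{prop : SD} records exactly these values. Consequently, for each fixed $\sigma$ only finitely many indices $j$ contribute, and the eigenspace corresponding to $\sigma$ is spanned by product functions $\alpha_j(r) S_j(p)$. Picking an orthonormal basis within each eigenspace then yields a global basis of eigenfunctions of the stated form, which is the content of the lemma.

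The routine aspects are the ODE computation and the verification that the projections commute with the boundary operators. The only delicate point will be justifying the termwise differentiation and the separation itself in the proper functional setting, but this is standard for elliptic problems with smooth boundary once one invokes regularity and the completeness of the spherical harmonics in $L^2(\mathbb{S}^{n-1})$.
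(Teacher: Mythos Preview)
Your argument is correct and is the standard separation-of-variables proof for this kind of statement. Note, however, that the paper does not supply its own proof of this lemma: it simply records the result as a consequence of \parencite[Prop.~4]{CV}. Your sketch is essentially what lies behind that citation---expanding in spherical harmonics, projecting the PDE and both boundary conditions onto each $S_j$-component, and observing that the resulting Euler ODE with the two boundary constraints singles out exactly the eigenvalues of Proposition~\ref{prop : SD}. So there is no discrepancy to flag; you have just unpacked the reference.
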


It is possible to compute $\sigma_{(k)}^N(A_R)$ explicitly, which is the $(k)$th eigenvalue of the Steklov-Neumann problem on $A_R$, counted without multiplicity.
\medskip

We state now Proposition $5$ of \cite{CV}:
\begin{prop} \label{prop : SN}
For $A_R$ as above, consider the Steklov-Neumann problem
\begin{align*}
    \left\{
    \begin{array}{ll}
       \Delta f = 0  & \mbox{ in } A_R  \\
        \partial_\nu f = \sigma f & \mbox{ on } \partial \B_1 \\
        \partial_\nu f = 0 & \mbox{ on } \partial \B_R.
    \end{array}
    \right.
\end{align*}
Then, for $k \ge 0$, the $(k)$th eigenvalue (counted without multiplicity) of this problem is 
\begin{align*}
    \sigma_{(k)}^N(A_R) = k \frac{(k+n-2)(R^{2k+n-2}-1)}{kR^{2k+n-2}+k+n-2}.
\end{align*}
\end{prop}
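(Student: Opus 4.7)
The plan is to use separation of variables in spherical coordinates on the annulus, reduce the eigenvalue equation to an ODE of Euler type, and then impose the two boundary conditions. This is exactly the same strategy as the one used for \Cref{prop : SD}, with only the Neumann condition at the outer boundary changed.

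First I would write a harmonic function $f$ on $A_R$ in the form $f(r,p) = \alpha(r) S_l(p)$ where $S_l$ is a spherical harmonic of degree $l$, so that $\Delta_{\S^{n-1}} S_l = -l(l+n-2)S_l$. The Euclidean Laplacian in polar coordinates then forces
\begin{align*}
    \alpha''(r) + \frac{n-1}{r}\alpha'(r) - \frac{l(l+n-2)}{r^2}\alpha(r) = 0,
\end{align*}
which is an Euler equation with general solution $\alpha(r) = A r^l + B r^{-(l+n-2)}$ (the two exponents are the roots of $x(x-1)+(n-1)x - l(l+n-2) = 0$).

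Next I would implement the boundary conditions. On the outer sphere $\partial\B_R$ the outward normal is $\partial_r$ and the Neumann condition $\alpha'(R)=0$ gives
\begin{align*}
    l A R^{l-1} - (l+n-2) B R^{-(l+n-1)} = 0,
\end{align*}
so $B = \frac{l}{l+n-2} R^{2l+n-2} A$. On the inner sphere $\partial\B_1$ the outward normal points into $-\partial_r$ and the Steklov condition reads $-\alpha'(1) = \sigma \alpha(1)$. Substituting the value of $B$ and simplifying yields
\begin{align*}
    \sigma = \frac{l(l+n-2)(R^{2l+n-2}-1)}{l R^{2l+n-2} + (l+n-2)},
\end{align*}
which, setting $l=k$, is exactly the claimed formula for $\sigma_{(k)}^N(A_R)$.

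To finish, I would argue that these exhaust all Steklov--Neumann eigenvalues. Since $\{S_l\}_{l \ge 0}$ is an $L^2$-basis of $\S^{n-1}$ and any $H^1$ function on $A_R$ admits a Fourier expansion in this basis with radial coefficients, each eigenfunction decomposes into components of the above type; the multiplicity for degree $l$ equals $\dim \ker(\Delta_{\S^{n-1}} + l(l+n-2))$. Checking that the function $\sigma(l)$ derived above is strictly increasing in $l \ge 0$ then guarantees that the $(k)$th distinct eigenvalue corresponds to $l = k$. The main (minor) obstacle is this monotonicity check: I would verify it by rewriting $\sigma(l) = (l+n-2) - \frac{(l+n-2)^2}{l R^{2l+n-2}+(l+n-2)}$ and observing that the subtracted term is positive and decreasing in $l$, so $\sigma(l)$ is strictly increasing; the rest of the argument is routine linear algebra.
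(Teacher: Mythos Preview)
The paper does not give its own proof of this proposition; it is quoted verbatim as Proposition~5 of \cite{CV}. Your separation-of-variables derivation is the standard one and the computation of the eigenvalue attached to the $l$th spherical harmonic is correct.

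There is, however, a genuine slip in your monotonicity step. The algebraic identity you write,
\[
\sigma(l) = (l+n-2) - \frac{(l+n-2)^2}{l R^{2l+n-2}+(l+n-2)},
\]
is false: the right-hand side equals $\dfrac{l(l+n-2)R^{2l+n-2}}{lR^{2l+n-2}+(l+n-2)}$, not $\sigma(l)$. The correct rewriting is
\[
\sigma(l) = (l+n-2) - \frac{(l+n-2)(2l+n-2)}{l R^{2l+n-2}+(l+n-2)},
\]
and even with this formula the subtracted term is \emph{not} decreasing in $l$ for all $R>1$ (e.g.\ for $n=3$ and $R$ close to $1$ it increases from $l=0$ to $l=1$). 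So the argument as written does not establish that the $(k)$th distinct eigenvalue corresponds to degree $k$.

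A clean fix is to use the one-dimensional variational characterisation: restricted to functions of the form $\alpha(r)S_l(p)$, the Rayleigh quotient is
\[
\frac{\int_1^R \bigl(\alpha'(r)^2 + \lambda_l\, r^{-2}\alpha(r)^2\bigr) r^{n-1}\,dr}{\alpha(1)^2},
\]
which is manifestly nondecreasing in the parameter $\lambda_l = l(l+n-2)$; since $\lambda_l$ is strictly increasing in $l$ and the minimiser for degree $l$ is not constant in $r$ when $l\ge 1$, the minimum is strictly increasing. This repairs the ordering claim, and the rest of your proof goes through.
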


In the same manner as before, we have the following expression for the Steklov-Neumann eigenvalues, see \parencite[Prop. 5]{CV}.

\begin{lemma} \label{lem : Neumann}
Each eigenfunction $\phi_l$ of the Steklov-Neumann problem on the annulus $A_{R}$ can be expressed as $\phi_l(r,p)=\beta_l(r)S_l(p)$, where $S_l$ is an eigenfunction for the $l^{th}$ harmonic of the sphere $\mathbb{S}^{n-1}$.
\end{lemma}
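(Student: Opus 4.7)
The plan is to use separation of variables, exploiting the $O(n)$-invariance of the annulus. In polar coordinates $(r,p) \in [1,R] \times \S^{n-1}$, the Euclidean metric on $A_R$ takes the warped-product form $dr^2 + r^2 g_0(p)$ and the Laplacian reads
\begin{align*}
    \Delta f = \partial_r^2 f + \frac{n-1}{r}\partial_r f + \frac{1}{r^2}\Delta_{\S^{n-1}} f.
\end{align*}
Let $(S_j)_{j=0}^\infty$ be an $L^2(\S^{n-1})$-orthonormal basis of spherical harmonics with $\Delta_{\S^{n-1}} S_j = \lambda_j S_j$, as already introduced in Section \ref{sect : var car}. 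For a Steklov-Neumann eigenfunction $f$ associated with $\sigma$, I would expand along the sphere factor as $f(r,p) = \sum_{j \ge 0} \alpha_j(r) S_j(p)$ with $\alpha_j(r) = \int_{\S^{n-1}} f(r,p)\, S_j(p)\, dV_{g_0}$.

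Substituting this expansion into the three equations of the Steklov-Neumann problem and using the $L^2$-orthogonality of the $S_j$ decouples the system, since the outward normal on $\partial A_R$ is radial. For each index $j$ one obtains the scalar radial problem
\begin{align*}
    \alpha_j''(r) + \frac{n-1}{r}\alpha_j'(r) - \frac{\lambda_j}{r^2}\alpha_j(r) = 0, \quad r \in [1,R],
\end{align*}
together with the boundary conditions $\alpha_j'(R) = 0$ and $-\alpha_j'(1) = \sigma \alpha_j(1)$. Conversely, any non-trivial solution of this scalar problem produces, by multiplication with $S_j$, a Steklov-Neumann eigenfunction of the same parameter $\sigma$.

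It follows that the Steklov-Neumann eigenspace attached to $\sigma$ decomposes as an orthogonal direct sum of subspaces of the form $\{\beta(r)\, S(p) : S \text{ a spherical harmonic of degree } l\}$, the indices $l$ being those for which the radial problem above with $\lambda_l = l(l+n-2)$ admits $\sigma$ as an eigenvalue. In particular one can always choose an eigenbasis made entirely of pure tensors $\phi_l(r,p) = \beta_l(r)\, S_l(p)$, which is precisely what the lemma claims. The only point requiring mild care is the rigorous justification of the termwise manipulation of the spherical expansion: this follows from the smoothness of Steklov eigenfunctions (elliptic regularity) together with the standard convergence theory of spherical harmonic series, and so does not present a genuine obstacle.
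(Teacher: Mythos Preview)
Your argument is correct: the separation-of-variables reduction to the radial ODE with boundary conditions $\alpha_j'(R)=0$ and $-\alpha_j'(1)=\sigma\alpha_j(1)$ is exactly the standard route, and the paper does not give its own proof here but simply refers to \parencite[Prop.~5]{CV}, where this computation is carried out. So your proposal matches the intended approach.
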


\section{The degenerated maximizing metric}  \label{sect : proof}


A particular case of hypersurfaces of revolution 
is the following: let $M=[0, L] \times \S^{n-1}$ be endowed with a metric of revolution $g(r,p)=dr^2+h^2(r)g_0(p)$. 
Let us suppose that there exists $\epsilon >0$ such that $h(r) = 1+r$ on $[0, \epsilon]$. 
Let us consider the connected component of the boundary $\S_0$ associated with $h(0)$. Then  the  $\epsilon$-neighborhood of $\S_0$ is an annulus with inner radius $1$ and outer radius $1+\epsilon$.

\begin{figure}[H]
\centering
\includegraphics[scale=0.6]{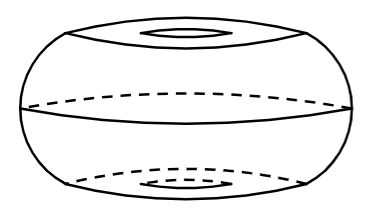}
\caption{On $[0, \epsilon]$, we have $h(r)=1+r$ and on $[L-\epsilon, L]$, we have $h(r) = -r+L+1$. This implies that the $\epsilon$-neighborhood of the boundary consists of two disjoint copies of an annulus with inner radius $1$ and outer radius $1+\epsilon$.}
\label{fig : surface de revolution}
\end{figure}

This particular case is the key idea that we use to prove Theorem \ref{thm : pricipal}. We prove it now.

\begin{proof}

We write $g_1(r,p)=dr^2+h_1^2(r)g_0(p)$. Because $h_1$ is smooth and $|h_1'| \leq 1$, we have $h_1(r) < 1+\frac{L}{2}$ for all $r \in [0, L]$. Since $h_1$ is continuous and $[0, L]$ is compact, $h_1$ reaches its maximum on $[0, L]$. We call 
\begin{align*}
    m := \max_{r \in [0, L]} \{h_1(r)\}.
\end{align*}
Notice that $1 \le m < 1+\frac{L}{2}$.
\medskip

We define a smooth function  $h_2 : [0, L] \longrightarrow \R$ by
\begin{align*}
    h_2(r)= 
    \left\{
    \begin{array}{lcl}
       1+r & \mbox{if}  & 0 \le r \le m-1 \\
    1+L-r & \mbox{if} & L-m+1 \le r \le L.      
    \end{array}
    \right.
\end{align*}
For $r\in (m-1, L-m+1)$, we only require that $h_2(r) > m$, that $h_2(L/2)= \frac{1+L/2+m}{2}$ and that 
\begin{align*}
    g_2(r,p) := dr^2 + h_2^2(r)g_0(p)
\end{align*}
is a symmetric metric of revolution on $M$, i.e for all $r \in [0, L]$, we have $h_2(r)= h_2(L-r)$. Note that we have $h_2 \ge h_1$ and that for $r \in (m-1,L-m+1)$ we have $h_2(r) > h_1(r)$.
\medskip

Besides, 
for $f$ a smooth function on $M$, we have
\begin{align*}
    R_{g_1}(f) = \frac{\int_M |\nabla f|_{g_1}^2 dV_{g_1}}{\int_\Sigma |f|^2dV_\Sigma} = \frac{\int_M \left((\partial_r f)^2 + \frac{1}{h_1^2}|\tilde{\nabla} f|_{g_0}^2 \right)h_1^{n-1}dV_{g_0}dr}{\int_\Sigma |f|^2dV_\Sigma}
\end{align*}
and 
\begin{align*}
   R_{g_2}(f) = \frac{\int_M |\nabla f|_{g_2}^2 dV_{g_2}}{\int_\Sigma |f|^2dV_\Sigma} = \frac{\int_M \left((\partial_r f)^2 + \frac{1}{h_2^2}|\tilde{\nabla} f|_{g_0}^2 \right)h_2^{n-1}dV_{g_0}dr}{\int_\Sigma |f|^2dV_\Sigma},
\end{align*}
where $\tilde{\nabla} f$ is the gradient of $f$ seen as a function of $p$. 
\medskip

Since $n\ge 3$, for all functions $f \in H^1(M)$, we have $R_{g_1}(f)\le R_{g_2}(f)$. Using the Min-Max principle, we can conclude that for all $k \ge 1$, we have $\sigma_k(M, g_1) \le \sigma_k(M, g_2)$. However, here we want to show a strict inequality.
\medskip

Because of the existence of a continuum of points $r$ for which $h_1(r) < h_2(r)$, if $\partial_r f$ does not vanish on any interval, then the inequality is strict.
\medskip


Let $k\ge 1$ be an integer. Let $E_{k+1}:=Span(f_{0,2}, \ldots, f_{k,2})$, where $f_{i,2}$ is a Steklov eigenfunction associated with $\sigma_i(M,g_2)$. We can choose these functions such that for all $i = 0, \ldots, k$, we have 
\begin{align*}
    \int_\Sigma (f_{i,2})^2 dV_\Sigma = 1,
\end{align*}
and hence
\begin{align*}
    \int_M |\nabla f_{i,2}|_{g_2}^2dV_{g_2} = \sigma_i(M,g_2).
\end{align*}
Let $f^{*} = \sum_{i=0}^k a_i f_{i,2} \in E_{k+1}$ be such that $\max_{f \in E_{k+1}} R_{g_1}(f) = R_{g_1}(f^*)$.
\medskip

We now consider two cases:
\begin{enumerate}
    \item Let us suppose $f^* = a_k f_{k,2}$ with $a_k \ne 0$, i.e $f^*$ is an eigenfunction associated with $\sigma_k(M, g_2)$. Then by \Cref{prop : sep variables}, we have $f^*(r,p) = u_j(r) S_j(p)$. Moreover, using \parencite[Prop. 2]{CV}, we know that $u_j$ is a non trivial solution of the ODE
    \begin{align*}
        \frac{1}{h^{n-1}} \frac{d}{dr} \left( h^{n-1} \frac{d}{dr}u_j  \right) - \frac{1}{h_2^2} \lambda_j u_j = 0.
    \end{align*}
    \begin{enumerate}
        \item If $\lambda_j =0$, which means $S_j = S_0 = const$, then $u_j$ cannot be locally constant. Indeed, otherwise $f^*$ would be locally constant, but since $f^*$ is harmonic, this implies that $f^*$ is constant, see \cite{Aron}. That is not the case because $k \ge 1$.
        \item If $\lambda_j \ne 0$, then $u_j$ cannot be locally constant, otherwise the ODE is not satisfied.
    \end{enumerate}
    Hence $u_j$ is not locally constant and then $\partial_r f^*$ does not vanish on any interval. Therefore, using the Min-Max principle (\ref{form : min max}), we have
    \begin{align*}
        \sigma_k(M, g_1) \le \max_{f \in E_{k+1}} R_{g_1}(f) = R_{g_1}(f^*) < R_{g_2}(f^*) = \sigma_k(M, g_2).
    \end{align*}

    \item Let us suppose $f^* = \sum_{i=0}^k a_i f_{i,2}$ such that there exists $0 \le i <k$ such that $a_i \ne 0$.
    
    Then by the Min-Max principle (\ref{form : min max}), we have
\begin{align*}
    \sigma_k(M,g_1)  & \le \max_{f \in E_{k+1}}R_{g_1}(f) 
       = R_{g_1}(f^*) 
       \le R_{g_2}(f^*) \\
      & = \frac{\int_M \sum_{i=0}^k a_i^2 |\nabla f_{i,2}|^2dV_{g_2}}{\int_\Sigma (\sum_{i=0}^k a_i f_{i,2})^2 dV_\Sigma} \\
      & = \frac{\sum_{i=0}^k a_i^2 \sigma_i(M, g_2)}{\sum_{i=0}^k a_i^2 } \quad \mbox{ since } \int_\Sigma f_{i,2} f_{j,2} dV_\Sigma = \delta_{ij} \\
      & < \sigma_k(M,g_2).
\end{align*}
\end{enumerate}

 In both cases, we have
 \begin{align*}
     \sigma_k(M, g_1) < \sigma_k(M, g_2).
 \end{align*}


\end{proof}

\begin{rem}
We never used the assumption that $g_2$ is a \textit{symmetric} metric of revolution on $M$ in the previous proof. However, it will be useful in the proofs of the theorems that follow.
\end{rem}

The process that constructs the metric $g_2$ from $g_1$ can then be repeated to create a third metric $g_3$, and so on. This generates a sequence of metrics $(g_i)$,  obtained from a sequence of functions $(h_i)$.
\begin{figure}[H]
    \centering
    \includegraphics[scale=0.6]{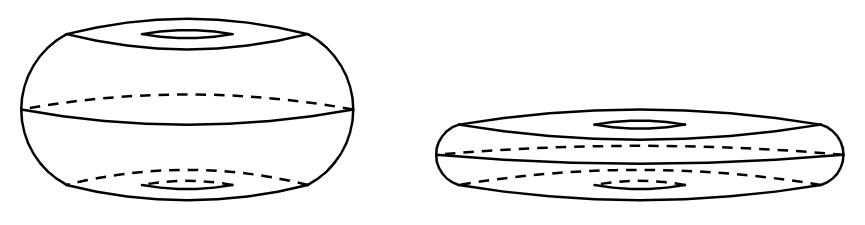}
    \caption{On the left, $M=[0,L]\times \S^{n-1}$ is endowed with a metric $g_i$ of the sequence. On the right, $M=[0,L]\times \S^{n-1}$ is endowed with another metric $g_j$ of the sequence, $j >i$.  }
    \label{fig : image suite metrique}
\end{figure}
The sequence $(h_i)$ uniformly converges to the function 
\begin{align*}
    h^* : [0, L] & \longrightarrow \R \\
    r & \longmapsto \left\{
    \begin{array}{lcl}
       1+r & \mbox{if}  & 0 \le r \le L/2 \\
    1+L-r & \mbox{if} &  L/2 \le r \le L.      
    \end{array}
    \right.
\end{align*}
This function is not smooth. Hence $(M, g^*)$, where $g^* = dr^2 + h^{*2}(r)g_0$, is not a hypersurface of revolution in the sense of Definition \ref{defn : revolution}. In the limit, $(M, g^*)$ can be seen as the gluing of two copies of an annulus of inner radius $1$ and outer radius $1+L/2$. The metric $g^*$ is therefore a maximizing metric, but is degenerated since it is induced by the function $h^*$ which is non-smooth. That is why, as already mentioned, we call $g^*$ the \textit{degenerated maximizing metric on $M$}.

\section{The first non trivial eigenvalue} \label{sect : proof 2}

In this Section, we prove Theorem \ref{thm : principal deux}. The idea consists of comparing $\sigma_1(M, g)$ with the Rayleigh quotient of a test function that is obtained from an eigenfunction for a mixed problem (Steklov-Dirichlet or Steklov-Neumann) introduced in \Cref{sect : mixed problems}. Then, to show that the upper bound $B_n(L)$ given is sharp, we take a metric of revolution $g_\epsilon$ on $M$ that is close to the degenerated maximizing metric $g^*$ and show that $\sigma_1(M, g_\epsilon)$ is close to $B_n(L)$. 
\begin{proof}
Let $(M= [0, L] \times \S^{n-1}, g)$ be a hypersurface of revolution, where $L>0$ is the meridian length of $M$. We recall that the boundary $\Sigma$ of $M$ consists of two disjoint copies of $\S^{n-1}$. We want to find a sharp upper bound $B_n(L)$ for $\sigma_1(M, g)$.
\medskip

We consider $A_{1+L/2}$ the annulus of inner radius $1$ and outer radius $1+L/2$. 
Let $\varphi_0$ be an eigenfunction for the first eigenvalue of the Steklov-Dirichlet problem on $A_{1+L/2}$, i.e.
\begin{align*}
    \sigma_0^D(A_{1+L/2}) =\frac{\int_0^{L/2} \int_{\S^{n-1}} \left( (\partial_r \varphi_0)^2 + \frac{1}{(1+r)^2} | \Tilde{\nabla}\varphi_0|^2  \right) (1+r)^{n-1}   dV_{g_0}dr}{\int_{\S^{n-1}} \varphi_0^2(0,p) dV_{g_0}}. 
\end{align*}

We define a new function 
\begin{align} \label{fct : varphi}
    \tilde{\varphi_0} : [0,L] \times \S^{n-1} & \longrightarrow \R \\ 
               (r, p) &  \longmapsto \left\{  
\begin{array}{cc}
     \varphi_0(r,p) & \mbox{ if } 0 \le r \le L/2  \\
     - \varphi_0(L-r,p) & \mbox{ if } L/2 \le r \le L. 
\end{array} \nonumber
\right.
\end{align}

The function $\tilde{\varphi_0}$ is continuous and we can check that 
\begin{align*}
    \int_\Sigma \tilde{\varphi_0}(r,p) dV_\Sigma  & = \int_{\S^{n-1}} \Tilde{\varphi_0}(0,p) dV_{g_0} + \int_{\S^{n-1}} \Tilde{\varphi_0}(L,p) dV_{g_0} \\
    & = \int_{\S^{n-1}} \varphi_0(0,p) dV_{g_0} - \int_{\S^{n-1}} \varphi_0(0,p) dV_{g_0} \\
    & = 0.
\end{align*}
Hence, thanks to formula (\ref{form : car var}), the function $\Tilde{\varphi_0}$ can be used as a test function for $\sigma_1(M,g)$.

We have 
\begin{align} \label{eqn : sigma 0}
    \sigma_1(M, g) & \le R_g(\Tilde{\varphi_0}) \nonumber \\
    & < R_{\tilde{g}}(\tilde{\varphi_0}) \quad \mbox{ where } \tilde{g}= dr^2 + \tilde{h}^2g_0 \mbox{ comes from Theorem } \ref{thm : pricipal} \nonumber \\
    & = \frac{\int_0^L \int_{\S^{n-1}} \left( (\partial_r \Tilde{\varphi_0})^2 + \frac{1}{\tilde{h}(r)^2} | \Tilde{\nabla} \Tilde{\varphi_0}|^2  \right) \tilde{h}(r)^{n-1}   dV_{g_0}dr}{\int_\Sigma \Tilde{\varphi_0}^2(0,p) dV_{\Sigma}} \nonumber \\
    & = \frac{2 \times \int_0^{L/2} \int_{\S^{n-1}} \left( (\partial_r \Tilde{\varphi_0})^2 + \frac{1}{\tilde{h}(r)^2} | \Tilde{\nabla} \Tilde{\varphi_0}|^2  \right) \tilde{h}(r)^{n-1}  dV_{g_0}dr}{2 \times \int_{\S^{n-1}} \Tilde{\varphi_0}^2(0,p) dV_{g_0}} \quad \mbox{ since } \tilde{g} \mbox{ is symmetric} \nonumber \\
    & = \frac{ \int_0^{L/2} \int_{\S^{n-1}} \left( (\partial_r \varphi_0)^2 + \frac{1}{\tilde{h}(r)^2} | \Tilde{\nabla} \varphi_0|^2  \right) \tilde{h}(r)^{n-1}   dV_{g_0}dr}{ \int_{\S^{n-1}} \varphi_0^2(0,p) dV_{g_0}} \nonumber \\
    & < \frac{\int_0^{L/2} \int_{\S^{n-1}} \left( (\partial_r \varphi_0)^2 + \frac{1}{(1+r)^2} | \Tilde{\nabla}\varphi_0|^2  \right) (1+r)^{n-1}   dV_{g_0}dr}{\int_{\S^{n-1}} \varphi_0^2(0,p) dV_{g_0}} \nonumber \\
    & = \sigma_0^D(A_{1+L/2}),
\end{align}
where the second strict inequality comes from the  existence of a continuum of points $r \in [0, L/2]$ such that $\tilde{h}(r) < 1+r$. 

\medskip

If $\phi_1$ is an eigenfunction for the first non trivial eigenvalue of the Steklov-Neumann problem on $A_{1+L/2}$, i.e 
\begin{align*}
   \sigma_1^N(A_{1+L/2}) =  \frac{\int_0^{L/2} \int_{\S^{n-1}} \left( (\partial_r \phi_1)^2 + \frac{1}{(1+r)^2} | \Tilde{\nabla} \phi_1 |^2  \right) (1+r)^{n-1}  dV_{g_0}dr}{\int_{\S^{n-1}} \phi_1^2(0,p) dV_{g_0}},
\end{align*}
then we define a new function 
\begin{align*}
    \Tilde{\phi_1} : [0, L] \times \S^{n-1} &\longrightarrow \R \\
     (r,p) & \longmapsto \left\{
     \begin{array}{cc}
         \phi_1(r,p) & \mbox{ if } 0 \le r \le L/2  \\
          \phi_1(L-r,p) & \mbox{ if } L/2 \le r \le L. 
     \end{array}
     \right.
\end{align*}
The function $\tilde{\phi_1}$ is continuous and we can check that
\begin{align*}
    \int_\Sigma \Tilde{\phi_1}(r,p) dV_\Sigma & = \int_{\S^{n-1}} \Tilde{\phi_1}(0,p) + \int_{\S^{n-1}} \Tilde{\phi_1}(L,p) \\
    & = 0 + 0 \\
    & = 0,
\end{align*}
hence we can use it as a test function for $\sigma_1(M,g)$. 
The same calculations as in (\ref{eqn : sigma 0}) show that 
\begin{align} \label{eqn : sigma 1 neumann}
    \sigma_1(M,g) < \sigma_1^N(A_{1+L/2}).
\end{align}

Putting Inequality (\ref{eqn : sigma 0}) and Inequality (\ref{eqn : sigma 1 neumann}) together, we get 
\begin{align} \label{ineg : bound}
    \sigma_1(M,g) < B_n(L) :&= \min\left\{ \sigma_0^D(A_{1+L/2}), \;  \sigma_1^N(A_{1+L/2}) \right\} \nonumber \\
    & = \min\left\{ \frac{(n-2)\left( 1+L/2 \right)^{n-2}}{\left( 1+L/2 \right)^{n-2}-1}, \frac{(n-1)\left( \left(1+L/2\right)^n -1 \right)}{\left( 1+ L/2 \right)^n +n-1}\right\}.
\end{align}

We will now prove that the bound $B_n(L)$ is sharp. This means that for each $\epsilon >0$, there exists a metric of revolution $g_\epsilon$ on $M$ such that $\sigma_1(M, g_\epsilon) > B_n(L)-\epsilon$.
\medskip

Let $\epsilon >0$. 
Let $M=[0, L] \times \S^{n-1}$ and let $g_\epsilon(r,p) = dr^2+h_\epsilon^2(r)g_0(p)$ be a metric of revolution on $M$ such that: 
\begin{enumerate}
    \item The function $h_\epsilon$ is symmetric: for all $r \in [0, L]$, we have $h_\epsilon(r)=h_\epsilon(L-r)$;
    \item For all $r \in [0, L/2- \delta]$, we have $h_\epsilon(r)=(1+r)$, with $\delta$ small enough to guarantee that for all $r \in [0, L/2]$, we have
    \begin{align*}
        \max \{ (1+r)^{n-3} - h_\epsilon(r)^{n-3}, (1+r)^{n-1}-h_\epsilon(r)^{n-1}\} < \frac{\epsilon}{B_n(L)} =: \epsilon^*.
    \end{align*}
    Geometrically, this means that $(M, g_\epsilon)$ looks like two copies of an annulus joined by a smooth curve, see Fig. \ref{fig : image suite metrique}.
\end{enumerate}

Let $f_1$ be an eigenfunction for $\sigma_1(M, g_\epsilon)$. Because $(M, g_\epsilon)$ is symmetric, then we can choose $f_1$  symmetric or anti-symmetric, which means that for all $r \in [0, L]$ and $p \in \S^{n-1}$, we have $|f_1(r,p)|=|f_1(L-r,p)|$. 

Moreover, it results from the calculations in (\ref{eqn : sigma 0}) that for any symmetric or anti-symmetric function $f$, we have
\begin{align*}
    R_{g_\epsilon}(f) =  \frac{ \int_0^{{L}/2} \int_{\S^{n-1}} \left( (\partial_r f)^2 + \frac{1}{h_\epsilon(r)^2} | \Tilde{\nabla} f|^2  \right) h_\epsilon(r)^{n-1}  dV_{g_0}dr}{ \int_{\S^{n-1}} f^2(0,p) dV_{g_0}}.
\end{align*}
We will compare  
\begin{align*}
     R_{g_\epsilon}(f_1) = \frac{ \int_0^{{L}/2} \int_{\S^{n-1}} \left( (\partial_r f_1)^2 + \frac{1}{h_\epsilon(r)^2} | \Tilde{\nabla} f_1|^2  \right) h_\epsilon(r)^{n-1}   dV_{g_0}dr}{ \int_{\S^{n-1}} (f_1)^2(0,p) dV_{g_0}}
\end{align*}
with
\begin{align*}
      R_{A_{1+L/2}}(f_1) = \frac{ \int_0^{{L}/2} \int_{\S^{n-1}} \left( (\partial_r f_1)^2 + \frac{1}{(1+r)^2} | \Tilde{\nabla} f_1|^2  \right) (1+r)^{n-1}   dV_{g_0}dr}{ \int_{\S^{n-1}} (f_1)^2(0,p) dV_{g_0}}.
\end{align*}
If we call $S :=  R_{A_{1+L/2}}(f_1) -  R_{g_\epsilon}(f_1)$, we have
\begin{align*}
    S & =  \frac{ \int_0^{L/2} \int_{\S^{n-1}}  (\partial_r f_1)^2 \left( (1+r)^{n-1} - h_\epsilon(r)^{n-1} \right) +  | \Tilde{\nabla} f_1|^2  \left( (1+r)^{n-3} -h_\epsilon(r)^{n-3} \right)   dV_{g_0}dr}{ \int_{\S^{n-1}} (f_1)^2(0,p) dV_{g_0}} \\
    & <  \frac{ \int_0^{L/2} \int_{\S^{n-1}}  ((\partial_r f_1)^2 \cdot \epsilon^* +  | \Tilde{\nabla} f_1|^2 \cdot \epsilon^* )   dV_{g_0}dr}{ \int_{\S^{n-1}} (f_1)^2(0,p) dV_{g_0}} \\
    & = \epsilon^* \cdot  \frac{ \int_0^{L/2} \int_{\S^{n-1}}  ((\partial_r f_1)^2  +  | \Tilde{\nabla} f_1|^2  )   dV_{g_0}dr}{ \int_{\S^{n-1}} (f_1)^2(0,p) dV_{g_0}} \\
    & < \epsilon^* \cdot  \frac{ \int_0^{L/2} \int_{\S^{n-1}}  ((\partial_r f_1)^2 h_\epsilon(r)^{n-1}  +  | \Tilde{\nabla} f_1|^2 h_\epsilon(r)^{n-3}  )   dV_{g_0}dr}{ \int_{\S^{n-1}} (f_1)^2(0,p) dV_{g_0}} \quad \mbox{ since $h_\epsilon \ge 1$} \\
    & = \epsilon^* \cdot \sigma_1(M, g_\epsilon) \quad \mbox{ since $f_1$ is an eigenfunction} \\
    & < \epsilon^* \cdot B_n(L) \\
    & = \epsilon.
\end{align*}
Hence, we have 
\begin{align} \label{fml : ineg sharp}
    R_{A_{1+L/2}}(f_1)  < \sigma_1(M,g_\epsilon) + \epsilon.
\end{align}
We now have two cases:
\begin{enumerate}
    \item $f_1$ can be written as $f_1(r,p)=u_0(r) S_0(p)$, where $S_0$ is a trivial harmonic function of the sphere, i.e $S_0$ is constant (we can choose $S_0 \equiv 1/\mbox{Vol}(\S^{n-1})$), and $u_0$ is smooth. Hence $f_1$ is constant on $\{0\} \times \S^{n-1}$,
    \begin{align*}
        \int_{\{0\} \times \S^{n-1}} f_1(r,p) dV_{g_0} = u_0(0) \ne 0.
    \end{align*}
    Moreover, since $|f_1(r, p)| = |f_1(L-r,p)|$ for all $r \in [0, L]$ and since
    \begin{align*}
        \int_{\Sigma} f_1(r,p) dV_\Sigma=0,
    \end{align*}
    we have 
    \begin{align*}
        f_1\left(\frac{L}{2}, p\right) = 0.
    \end{align*}
    Therefore, we can use $f_{1{\vert_{[0, L/2] \times \S^{n-1}}}}$ as a test function for $\sigma_0^D(A_{1+L/2})$, and we can state 
\begin{align*}
    \sigma_0^D(A_{1+L/2}) \leq R_{A_{1+L/2}}(f_1).
\end{align*}

    \item $f_1$ can be written as $f_1(r,p) = u_1(r) S_1(p)$, where $S_1$ is a non constant harmonic function of the sphere associated with the first non zero eigenvalue  and $u_1$ is smooth.  Hence 
    \begin{align*}
        \int_{\{0\} \times \S^{n-1}} f_1(r,p) dV_{g_0} = 0.
    \end{align*}
    Moreover, we have $u_1(L/2) > 0$. 
\medskip    

    Added with the fact that $|f_1(r, p)| = |f_1(L-r,p)|$ for all $r \in [0, L]$ and because $f_1$ is smooth, we can conclude
    \begin{align*}
        \partial_r f_1\left( \frac{L}{2}, p\right) = 0.
    \end{align*}

    Therefore, we can use $f_{1{\vert_{[0, L/2] \times \S^{n-1}}}}$ as a test function for $\sigma_1^N(A_{1+L/2})$ and we can state 
\begin{align*}
    \sigma_1^N(A_{1+L/2}) \leq R_{A_{1+L/2}}(f_1).
\end{align*}
\end{enumerate}
But we defined $B_n(L)$ as  
\begin{align*}
      B_n(L) = \min \{\sigma_0^D(A_{1+L/2}), \sigma_1^N(A_{1+L/2}) \}.
\end{align*}
Hence we have 
\begin{align*}
    B_n(L) \le R_{A_{1+L/2}}(f_1)  \stackrel{(\ref{fml : ineg sharp})}{<} \sigma_1(M,g_\epsilon) + \epsilon 
\end{align*}
and then
\begin{align*}
    \sigma_1(M,g_\epsilon) > B_n(L) - \epsilon.
\end{align*}
\end{proof}

From this result we can prove Corollary \ref{cor : corollaire du principal deux}.

\begin{proof}

By \Cref{thm : principal deux}, the inequality  (\ref{ineg : bound}) holds which is
\begin{align*}
    \sigma_1(M,g) <  \min\left\{ \frac{(n-2)\left( 1+L/2 \right)^{n-2}}{\left( 1+L/2 \right)^{n-2}-1}, \frac{(n-1)\left( \left(1+L/2\right)^n -1 \right)}{\left( 1+ L/2 \right)^n +n-1}\right\}.
\end{align*}

We consider the two functions 
\begin{align*}
    L & \longmapsto \frac{(n-2)\left( 1+L/2 \right)^{n-2}}{\left( 1+L/2 \right)^{n-2}-1} = \sigma_0^D(A_{1+L/2}) \\
    L & \longmapsto \frac{(n-1)\left( \left(1+L/2\right)^n -1 \right)}{\left( 1+ L/2 \right)^n +n-1} = \sigma_1^N(A_{1+L/2}).
\end{align*}

\begin{figure}[H]
\centering
\includegraphics[scale=1.0]{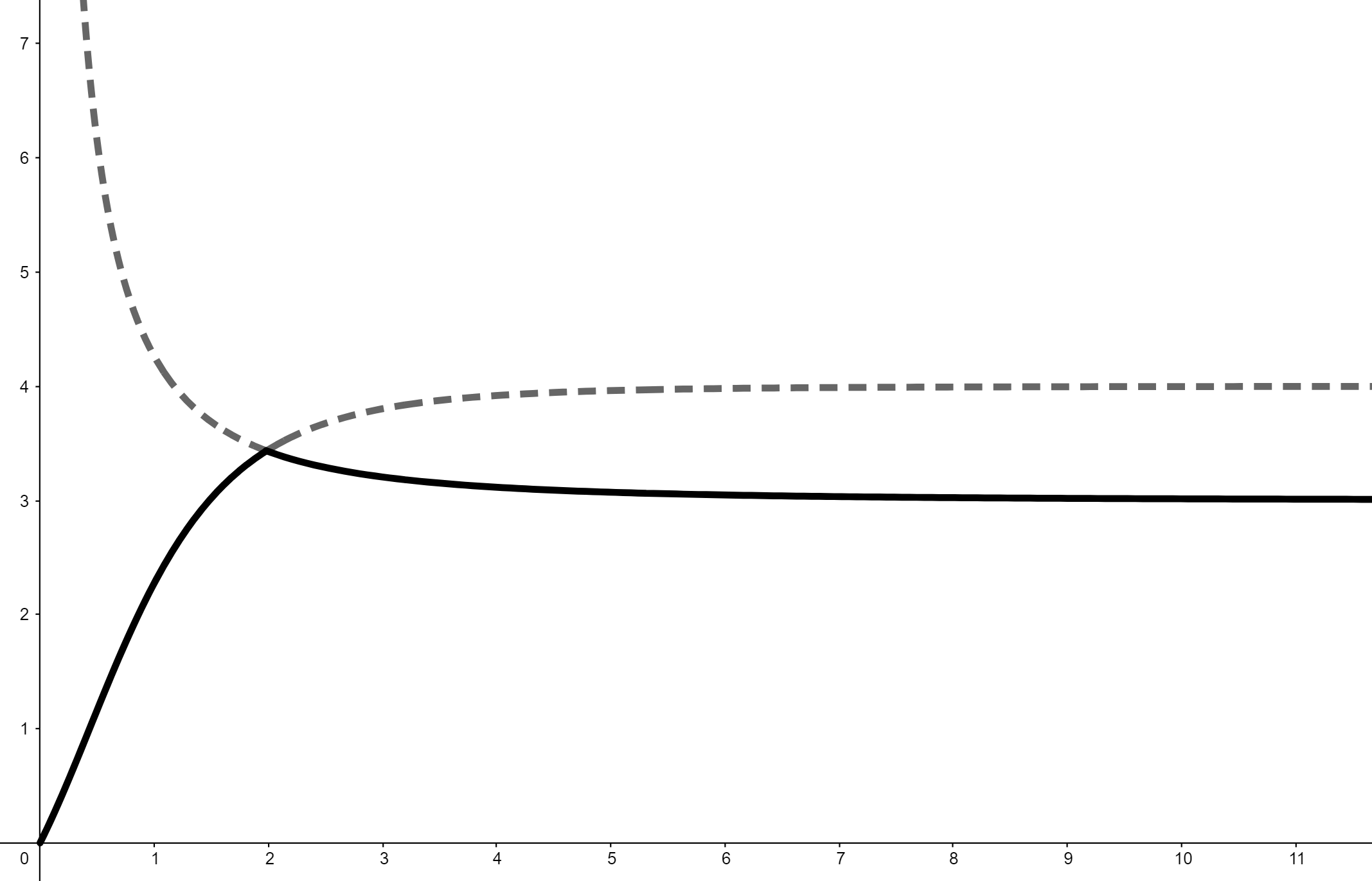}
\caption{Representation of the case $n = 5$. The decreasing smooth curve is $L \longmapsto \sigma_0^D(A_{1+L/2})$ while the increasing smooth curve is $L \longmapsto \sigma_1^N(A_{1+L/2})$. The solid curve is the bound $B_5(L)$ given by Theorem \ref{thm : principal deux}. 
}
\label{fig : borne}
\end{figure}

We can show that $L \longmapsto \sigma_0^D(A_{1+L/2})$ is strictly decreasing with $L$. Indeed, let $L' > L$ and let $\varphi_0$ be an eigenfunction for $\sigma_0^D(A_{1+L/2})$. We consider 
\begin{align*}
    \bar{\varphi}_0 : [0, \frac{L'}{2}] \times \S^{n-1} & \longrightarrow \R 
\end{align*}
the extension by $0$ of $\varphi_0$ to the annulus $A_{1+L'/2}$. We get
\begin{align*}
    \sigma_0^D(A_{1+L'/2}) < R_{A_{1+L/2}}(\Bar{\varphi}_0) =  R_{A_{1+L'/2}}(\varphi_0)  = \sigma_0^D(A_{1+L/2}),
\end{align*}
where the strict inequality comes from the fact that $\Bar{\varphi}_0$ is not an eigenfunction associated with $\sigma_0^D(A_{1+L'/2})$. Indeed, if we suppose that $\Bar{\varphi}_0$ is an eigenfunction for $\sigma_0^D(A_{1+L'/2})$, then it is harmonic in $A_{1+L'/2}$ (since it satisfies the Steklov-Dirichlet problem), and since $\Bar{\varphi}_0$ vanishes on the open set $A_{1+L'/2} \backslash A_{1+L/2}$, then by \cite{Aron} $\Bar{\varphi}_0$ is constant, which is a contradiction.
\medskip

In the same way, we can show that   $L \longmapsto \sigma_1^N(A_{1+L/2})$ is strictly increasing with $L$.  Indeed, let $L' > L$ and let $\phi_1$ be an eigenfunction for $\sigma_1^N(A_{1+L'/2})$. We consider 
\begin{align*}
     \bar{\phi}_1 : [0, \frac{L}{2}] \times \S^{n-1} & \longrightarrow \R 
\end{align*}
the restriction of $\phi_1$ to the annulus $A_{1+L/2}$. We get
\begin{align*}
    \sigma_1^N(A_{1+L/2}) \le R_{A_{1+L/2}}(\Bar{\phi}_1) < R_{A_{1+L'/2}}(\phi_1) = \sigma_1^N(A_{1+L'/2}).
\end{align*}

Hence the bound we gave possesses a maximum depending only on the dimension $n$, given by
\begin{align*} 
    \sigma_1(M,g) < B_n := \frac{(n-2)\left( 1+L_1/2 \right)^{n-2}}{\left( 1+L_1/2 \right)^{n-2}-1}
\end{align*}
where $L_1$ is the unique positive solution of the equation
\begin{align*}
    \left(1+L/2\right)^{2n-2}-(n-1)\left(1+L/2\right)^n-(n-1)^2\left(1+L/2)\right)^{n-2}+n-1=0.
\end{align*}

In order to prove that this bound is sharp, let $\epsilon >0$. We define $M_\epsilon := [0, L_1]\times \S^{n-1}$. Theorem \ref{thm : principal deux} guarantees that there exists a metric of revolution $g_\epsilon$ on $M_\epsilon$ such that $\sigma_1(M_\epsilon, g_\epsilon) > B_n(L_1) - \epsilon = B_n - \epsilon$, which ends the proof.

\end{proof}

We continue by proving \Cref{rem : dimension sur L1}.

\begin{proof}
We know that there exists a unique positive value of $L$, that we call $L_1=L_1(n)$, such that the equality 
\begin{align*}
    \left(1+L/2\right)^{2n-2}-(n-1)\left(1+L/2\right)^n-(n-1)^2\left(1+L/2)\right)^{n-2}+n-1=0
\end{align*}
holds. To ease notation, we substitute $(1+L/2)$ by $R$ and we can state that there is a unique value of $R \in (1, \infty)$ such that the equality 
\begin{align*}
    R^{2n-2}-(n-1)R^n-(n-1)^2R^{n-2}+n-1=0
\end{align*}
holds. This equation is equivalent to
\begin{align*}
    R^{n-2}\left( R^{n}-(n-1)R^2-(n-1)^2\right)+n-1=0,
\end{align*}
and we call $R_1=R_1(n)$ its unique solution in $(1, \infty)$.
We prove that $R_1(n) \underset{n \to \infty}{\longrightarrow} 1$.

\medskip
We call 
\begin{align*}
    \psi_n(R) :=  R^{n}-(n-1)R^2-(n-1)^2
\end{align*}
and 
\begin{align*}
    \Psi_n(R) :=  R^{n-2}\left( R^{n}-(n-1)R^2-(n-1)^2\right)+n-1.
\end{align*}

Then, for $R_1$ to be such that $\Psi_n(R_1) =0$, it is necessary that $\psi_n(R_1)<0$.
\medskip

Thus, 
\begin{align*}
    R_1^{n} & < (n-1)R_1^2 + (n-1)^2 \\
    & < (n-1)^2(R_1^2 +1) \\
    & < (n-1)^2 \cdot 2R_1^2 \hspace{2 cm} \mbox{ since } R_1 > 1 \\
    & < (n-1)^3 R_1^2 \hspace{2 cm} \mbox{ since } n-1 \ge 2.
\end{align*}
Therefore,
\begin{align*}
    n \ln(R_1) < 3 \ln(n-1) + 2 \ln(R_1) 
\end{align*}
so
\begin{align*}
    \ln(R_1) < \frac{3 \ln(n-1)}{n-2}
\end{align*}
and 
\begin{align*}
    R_1 < e^{\frac{3 \ln(n-1)}{n-2}}. 
\end{align*}
As we substituted $(1+L/2)$ by $R$, and we can state that
\begin{align*}
    L_1(n) & < 2 \left( e^{\frac{3 \ln(n-1)}{n-2}} -1 \right). \\
\end{align*}
Therefore, since $\frac{3 \ln(n-1)}{n-2} \underset{n \to \infty }{\longrightarrow} 0$, we have
\begin{align*}
    L_1(n)  \underset{n \to \infty }{ \longrightarrow } 0.
\end{align*}

Moreover, we have 
\begin{align*}
    n-1 > B_n 
    > n-2 \underset{n \to \infty}{\longrightarrow} \infty.
\end{align*}

\end{proof}

\section{Stability properties of hypersurfaces of revolution } \label{sect : stability}

The goal of this section is to prove Theorem \ref{thm : stability 1} and Theorem \ref{thm : stability k}, which show some stability properties of the hypersurfaces we are studying in this paper. For \Cref{thm : stability 1}, the key idea is to choose $L \ne L_1$ and compare $\sigma_1(M = [0, L] \times \S^{n-1}, g)$ with the first non trivial eigenvalue of $M$ when endowed with the degenerated maximizing metric, namely $B_n(L)$. For the case of \Cref{thm : stability k}, the strategy consists of showing that among all metrics of revolution that are not close (in a sense properly defined) to the degenerated maximizing metric, none of them induces a first non trivial eigenvalue that is close to $B_n(L)$. We prove these theorems now.

\subsection{Proof of  Theorem \ref{thm : stability 1}}

Recall that here we suppose $L \ne L_1$.

\begin{proof}
Let $g$ be any metric of revolution on $M= [0, L] \times \S^{n-1}$. Then we have 
\begin{align*}
    \sigma_1(M, g) < B_n(L),
\end{align*}
where $B_n(L)$ is given by \Cref{thm : principal deux}.
\medskip

We define $C(n, L):= B_n - B_n(L)$, which is strictly positive since we assumed $L \ne L_1$. Then we have 
\begin{align*}
    B_n - \sigma_1(M, g) \ge B_n - B_n(L) = C(n, L).
\end{align*}

Let $0< \delta < \frac{B_n-(n-2)}{2}$, and let us suppose $|B_n - \sigma_1(M, g)| < \delta$. Therefore, we have
$|B_n - \sigma_1(M, g^*)| < \delta$, where we wrote $g^*$ the degenerated maximizing metric on $M$.  We consider two cases:
\begin{enumerate}
    \item We suppose $L_1 < L$. In this case, we have $B_n(L) = \sigma_0^D(A_{1+L/2}) = \frac{(n-2)(1+L/2)^{n-2}}{(1+L/2)^{n-2}-1}$. We write 
    \begin{align*}
        R := 1+L/2 \mbox{ and } \sigma_1(R) := \frac{(n-2)R^{n-2}}{R^{n-2}-1}.
    \end{align*}
    Hence we have $|B_n - \sigma_1(R)| < \delta \implies R \in [R_1, R_\delta]$, where $R_1 = 1+L_1/2$ and $R_\delta$ is defined by    $\sigma_1(R_\delta) = B_n - \delta$. Note that $R_\delta$ exists since we assumed $\delta < B_n -(n-2)$. We can calculate that 
    \begin{align*}
        R_\delta = \left( \frac{B_n - \delta}{B_n - (n-2) - \delta}\right)^{\frac{1}{n-2}} \mbox{ and } R_1 = \left( \frac{B_n}{B_n - (n-2)} \right)^{\frac{1}{n-2}}.
    \end{align*}
    Thus, we have 
    \begin{align*}
        |R_1 - R| \le R_\delta - R_1 = \left( \frac{B_n - \delta}{B_n - (n-2) - \delta}\right)^{\frac{1}{n-2}} - \left( \frac{B_n}{B_n - (n-2)} \right)^{\frac{1}{n-2}}.
    \end{align*}
    To estimate this expression, we use the identity $x^{n-2} - y^{n-2} = (x-y)(x^{n-3} + x^{n-4}y + \ldots + xy^{n-4} + y^{n-3})$, with $x = R_\delta$ and $y = R_1$. On the one hand, we can compute that 
    \begin{align*}
        R_\delta^{n-2} - R_1^{n-2} = \frac{(n-2)\delta}{(B_n-(n-2)-\delta)(B_n-(n-2))} \le \frac{2(n-2)\delta}{(B_n - (n-2))^2},
    \end{align*}
    where the inequality comes from the assumption $\delta < \frac{B_n - (n-2)}{2}$. On the other hand, we can compute that 
    \begin{align*}
        R_\delta^{n-3} + R_\delta^{n-4}R_1 + \ldots + R_\delta R_1^{n-4} + R_1^{n-3} \ge (n-2) \cdot \left(  \frac{B_n}{B_n -(n-2)}\right)^{\frac{n-3}{n-2}}.
    \end{align*}
    Therefore, 
    \begin{align*}
        R_\delta - R_1   \le \frac{2/(B_n-(n-2))^2}{\left( B_n/(B_n-(n-2))\right)^{\frac{n-3}{n-2}}} \cdot \delta := C_1(n) \cdot \delta.
    \end{align*}
    Since we wrote $R = 1+L/2$, we can conclude that, for $L_1 < L$ and $0 < \delta < \frac{B_n -(n-2)}{2}$, we have 
    \begin{align*}
        B_n - \sigma_1(M, g) < \delta \implies L - L_1 < 2C_1(n) \cdot \delta.
    \end{align*}

    \item Now we suppose $L < L_1$ and we do a similar calculation, this time with $B_n(L) = \sigma_1^N(A_{1+L/2}) = \frac{(n-1)((1+L/2)^n-1)}{(1+L/2)^n+n-1}$. We obtain a constant 
    \begin{align*}
        C_2(n) := \frac{ (n-2)^2/(n-1-B_n)^2  }{ n \left(  ((n-1)B_n+1)/(n-1-B_n)    \right)^{ \frac{1}{n}  }}
    \end{align*}
    such that 
    \begin{align*}
        B_n -\sigma_1(M, g) < \delta \implies |L_1 -L| \le 2C_2(n) \cdot \delta.
    \end{align*}
\end{enumerate}
Defining $C(n) := 2 \cdot \max \{ C_1(n), C_2(n)\}$ concludes the proof.

\end{proof}

\subsection{Proof of Theorem \ref{thm : stability k}}

Recall that we fixed $m \in [1, 1+L/2)$ and that we defined $\mathcal{M}_m := \{$metrics of revolution $g$ induced by a function $h$ such that $\max_{r \in [0, L]} \{h(r)\} \le m\}$.  

\begin{proof}
Let $g \in \mathcal{M}_m$, and let $h : [0, L] \longrightarrow \R_+^*$ be the function which induces $g$. We define a new function $h_m : [0, L] \longrightarrow \R_+^*$ as follows:
    \begin{align*}
    h_m(r) = \left\{
    \begin{array}{ll}
        1+r & \mbox{ if } 0 \le r \le m-1  \\
        m & \mbox{ if } m-1 \le r \le L -m+1 \\
       1+L-r  & \mbox{ if } L-m+1 \le r \le L. 
    \end{array}
    \right.
\end{align*}
\begin{figure}[ht]
    \centering
    \includegraphics[scale=0.5]{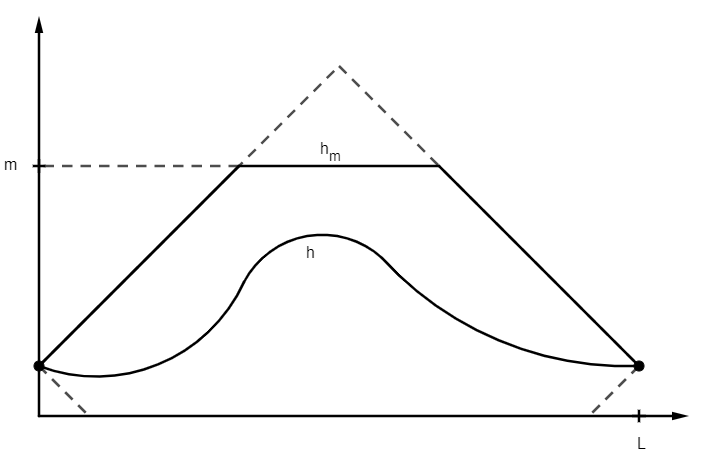}
    \caption{Since $g \in \mathcal{M}_m$, the function $h$ which induces $g$ satisfies $h \le h_m$.}
    \label{fig: h_m}
\end{figure}

We call $g_m$ the metric induced by $h_m$. Notice that $g_m$ is not a metric of revolution in the sense of  \Cref{defn : revolution} since $h_m$ is not smooth.
\medskip

In the same spirit as  in Sect. \ref{sect : proof},  
for any smooth function $f$ on $M$, we have
\begin{align*}
    R_{g}(f) =  \frac{\int_M \left((\partial_r f)^2 + \frac{1}{h^2}|\tilde{\nabla} f|_{g_0}^2 \right)h^{n-1}dV_{g_0}dr}{\int_\Sigma |f|^2dV_\Sigma}
\end{align*}
and 
\begin{align*}
   R_{g_m}(f) = \frac{\int_M \left((\partial_r f)^2 + \frac{1}{h_m^2}|\tilde{\nabla} f|_{g_0}^2 \right)h_m^{n-1}dV_{g_0}dr}{\int_\Sigma |f|^2dV_\Sigma}.
\end{align*}
Therefore, since $n \ge 3$ and $h \le h_m$,  we have 
\begin{align*}
    \sigma_1(M, g) \le \sigma_1(M, g_m).
\end{align*}
We can now consider a new function $\tilde{h}_m$, obtained from $h_m$ by smoothing out the two non-smooth points, with $\tilde{h}_m$ satisfying:
\begin{enumerate}
    \item For all $r \in [0, L]$, we have $h_m(r) \le \tilde{h}_m(r)$;
    \item  The metric $\tilde{g}_m$ induced by $\tilde{h}_m$ is a metric of revolution in the sense of \Cref{defn : revolution}.
\end{enumerate}
Remark that since $h_m \le \tilde{h}_m$, we have $\sigma_1(M, g_m) \le \sigma_1(M, \tilde{g}_m)$.
\medskip

We define $C(n, L, m) := B_n(L) - \sigma_1(M, \tilde{g}_m)$, which is strictly positive by \Cref{thm : principal deux}. Then we have
\begin{align*}
    B_n(L)- \sigma_1(M, g) \ge B_n(L)- \sigma_1(M, \tilde{g}_m) = C(n, L, m).
\end{align*}
\end{proof}





\section{Upper bounds for higher Steklov eigenvalues} \label{sect : 6}

In this section, we want to compute some sharp upper bound for higher Steklov eigenvalues of hypersurfaces of revolution. Therefore, we will have to deal with the multiplicity of the eigenvalues. We write $\lambda_{(k)}, \sigma_{(k)}, \sigma_{(k)}^D, \sigma_{(k)}^N$ for the $(k)$th eigenvalue counted without multiplicity.
\medskip

Before we can state and prove our results, we first recall some known properties of the multiplicities of the eigenvalues under consideration.
\medskip


Given a hypersurface of revolution $(M=[0,L] \times \S^{n-1}, g)$, we want to provide information about the multiplicity of the Steklov eigenvalues of $(M, g)$.
\medskip

For the classical Laplacian problem $\Delta S = \lambda S$ on  $(\S^{n-1}, g_0)$, we know \parencite[pp. 160-162]{BGM} that the set of eigenvalues is $\{ \lambda_{(k)} = k(n+k-2) : k \ge 0 \}$, where the multiplicity $m_0$ of $\lambda_{(0)}=0$ is $1$ and the multiplicity of $\lambda_{(k)}$ is 
\begin{align} \label{frm : multiplicite}
    m_k:=\frac{(n+k-3)(n+k-4) \ldots n(n-1)}{k!}(n+2k-2).
\end{align}
 
As such, given $k \ge 0$, there exist $m_k$ independent functions $S_k^1, \ldots, S_k^{m_k}$ such that $\Delta S_k^i = \lambda_{(k)} S_k^i, \; i=1, \ldots, m_k$.
\medskip

Given $k \ge 0$, there are $m_k$ independent Steklov-Dirichlet eigenfunctions associated with the eigenvalue $\sigma_{(k)}^D(A_{1+L/2})$, that can be written $\varphi_k^i(r,p) = \alpha_{k}(r)S_{k}^i(p), \; i = 1, \ldots, m_k$. For the Steklov-Neumann case, the eigenfunctions associated with $\sigma_{(k)}^N(A_{1+L/2})$  can be written $\phi_k^i(r,p) = \beta_{k}(r)S_k^i(p), \; i = 1, \ldots, m_k$. Indeed, for each of these problems, the multiplicity of the $(k)$th eigenvalue is exactly $m_k$, see, for example, \parencite[Prop. 3]{CV}.

\subsection{Upper bound for \texorpdfstring{$\sigma_{2}(M, g), \ldots, \sigma_{m_1}(M, g)$}{sigma2}} \label{subsect : sigma2 sigmam1}

In this section, we prove the following theorem:
\begin{thm} \label{thm : bound sigma2 sigmam1}
    Let $(M = [0, L] \times \S^{n-1}, g)$ be a hypersurface of revolution in Euclidean space with two boundary components each isometric to $\S^{n-1}$ and dimension $n \ge 3$. Let $m_1$ be the multiplicity of the first non trivial eigenvalue of the classical Laplacian problem on $(\S^{n-1}, g_0)$. Then we have 
    \begin{align*}
        \sigma_2(M, g) = \ldots = \sigma_{m_1}(M, g) < B_n^2(L) = \ldots = B_n^{m_1}(L) = : \sigma_{(1)}^N(A_{1+L/2}).
    \end{align*}
    Moreover, this bound is sharp: for all $\epsilon >0$ there exists a metric of revolution $g_\epsilon$ on $M$ such that 
    \begin{align*}
        \sigma_2(M, g_\epsilon) = \ldots = \sigma_{m_1}(M, g_\epsilon) > \sigma_{(1)}^N(A_{1+L/2}) - \epsilon.
    \end{align*}
\end{thm}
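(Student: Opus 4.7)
The plan is to adapt the Steklov-Neumann branch of the proof of Theorem \ref{thm : principal deux} by upgrading its single symmetric test function into an $m_1$-dimensional family that exploits the multiplicity of the first non-trivial spherical harmonic. First I fix an $L^2$-orthonormal basis $S_1^1,\ldots,S_1^{m_1}$ of the $\lambda_{(1)}$-eigenspace on $(\S^{n-1},g_0)$. By Lemma \ref{lem : Neumann} the Steklov-Neumann eigenfunctions $\phi_1^i(r,p)=\beta_1(r)S_1^i(p)$ on $A_{1+L/2}$ share a common radial factor $\beta_1$. Symmetrically extending each across $r=L/2$ (in the sign-preserving way used for $\tilde{\phi}_1$ in that proof) produces functions $\tilde{\phi}_1^i$ on $M$ with $\int_\Sigma\tilde{\phi}_1^i\,dV_\Sigma=0$ and $\int_\Sigma\tilde{\phi}_1^i\tilde{\phi}_1^j\,dV_\Sigma=2\beta_1(0)^2\delta_{ij}$. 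On the $(m_1+1)$-dimensional subspace $E=\mathrm{span}(1,\tilde{\phi}_1^1,\ldots,\tilde{\phi}_1^{m_1})\subset H^1(M)$, separation of variables together with the $L^2$- and $H^1$-orthogonality of the $S_1^i$ on $\S^{n-1}$ (the latter via integration by parts, giving $\int_{\S^{n-1}}\tilde\nabla S_1^i\cdot\tilde\nabla S_1^j\,dV_{g_0}=\lambda_{(1)}\delta_{ij}$) kills every cross term in both $\int_M|\nabla f|^2_g\,dV_g$ and $\int_\Sigma f^2\,dV_\Sigma$. Hence $R_g(f)\le\max_i R_g(\tilde{\phi}_1^i)$ for every $f\in E$, and the computation culminating in (\ref{eqn : sigma 1 neumann}) applies term by term to give $R_g(\tilde{\phi}_1^i)<\sigma_{(1)}^N(A_{1+L/2})$. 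The Min-Max principle (\ref{form : min max}) then yields $\sigma_{m_1}(M,g)<\sigma_{(1)}^N(A_{1+L/2})$, and the ordering $\sigma_2\le\ldots\le\sigma_{m_1}$ gives the bound at all intermediate indices.

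The coincidence $\sigma_2(M,g)=\ldots=\sigma_{m_1}(M,g)$ reflects the radial-sector decomposition of Proposition \ref{prop : sep variables}: each eigenvalue in an $l\ge 1$ sector appears with exact multiplicity $m_l$, while the second-order radial ODE in the $l=0$ sector admits only two Steklov eigenvalues, one of them being $0$. A short case analysis according to whether this non-zero $l=0$ eigenvalue precedes or follows the first $l=1$ eigenvalue, together with $\mu_0^{(1)}\le\mu_0^{(l)}$ for $l\ge 2$, identifies $\sigma_2,\ldots,\sigma_{m_1}$ with the first non-zero $l=1$ eigenvalue in every case. For sharpness I would reuse the symmetric metric $g_\epsilon$ built in the proof of Theorem \ref{thm : principal deux} (with $h_\epsilon(r)=1+r$ on $[0,L/2-\delta]$). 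Its first $l=1$ eigenvalue $\mu_0^{(1)}(g_\epsilon)$ has a symmetric radial part $u_1$ (the antisymmetric part would correspond to the larger eigenvalue $\sigma_{(1)}^D(A_{1+L/2})$), so the restriction to $[0,L/2]$ of the associated eigenfunction is an admissible test function for $\sigma_{(1)}^N(A_{1+L/2})$; rerunning in the $l=1$ sector the Rayleigh-quotient cutoff argument leading to (\ref{fml : ineg sharp}) yields $\mu_0^{(1)}(g_\epsilon)>\sigma_{(1)}^N(A_{1+L/2})-\epsilon$, which by its multiplicity $m_1$ lifts to $\sigma_{m_1}(M,g_\epsilon)>\sigma_{(1)}^N(A_{1+L/2})-\epsilon$.

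The main obstacle I anticipate is verifying that $\sigma_{m_1}(M,g_\epsilon)$ really does arise from the first $l=1$ eigenvalue rather than from some $l\ge 2$ sector, equivalently the strict ordering $\mu_0^{(1)}(g_\epsilon)<\mu_0^{(l)}(g_\epsilon)$ for every $l\ge 2$. I would deduce this by a continuity argument from the annular baseline, using the strict monotonicity $\sigma_{(1)}^N(A_R)<\sigma_{(l)}^N(A_R)$ for $l\ge 2$ together with stability of each sector's radial spectrum under the small perturbation of $h_\epsilon$ from $1+r$ near $r=L/2$; alternatively, one could rerun the sector-by-sector cutoff test-function argument of Theorem \ref{thm : principal deux} to bound each $\mu_0^{(l)}(g_\epsilon)$ from below by a quantity approaching $\sigma_{(l)}^N(A_{1+L/2})$.
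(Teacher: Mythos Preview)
Your argument is correct and in fact tidier than the paper's for the upper bound. The paper splits into the cases $L\le L_1$ and $L>L_1$; in the first it asserts that the $\sigma_1$-eigenfunction lies in the $l=1$ sector and invokes the bound $\sigma_1<\sigma_{(1)}^N$ from Theorem~\ref{thm : principal deux}, while in the second it asserts that $\sigma_1$ sits in the $l=0$ sector and $\sigma_2$ in the $l=1$ sector, and then plugs the single test function $\tilde\phi_1$ (orthogonal to $f_0,f_1$) into the variational characterization~(\ref{form : car var}) for $\sigma_2$. Your use of the Min--Max formula~(\ref{form : min max}) on the whole $(m_1{+}1)$-dimensional block $E=\mathrm{span}(1,\tilde\phi_1^1,\ldots,\tilde\phi_1^{m_1})$ bypasses this case distinction entirely and yields $\sigma_{m_1}(M,g)<\sigma_{(1)}^N(A_{1+L/2})$ in one stroke; that is a genuine simplification. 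For the equality $\sigma_2=\ldots=\sigma_{m_1}$, your sector decomposition is exactly the mechanism the paper is using implicitly; the one ingredient you should write out is the monotonicity $\mu_0^{(1)}\le\mu_0^{(l)}$ for $l\ge 2$, which is immediate from the variational characterization of the sector-wise problem (the radial Rayleigh quotient is pointwise increasing in $\lambda_{(l)}$).

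Your ``main obstacle'' is a phantom. Your own equality argument shows, for \emph{every} metric of revolution $g$ and in particular for $g_\epsilon$, that $\sigma_2(M,g)=\ldots=\sigma_{m_1}(M,g)=\mu_0^{(1)}(g)$; hence once you have established $\mu_0^{(1)}(g_\epsilon)>\sigma_{(1)}^N(A_{1+L/2})-\epsilon$ via the cutoff comparison (which goes through exactly as in~(\ref{fml : ineg sharp}), using that the symmetric radial branch in the $l=1$ sector restricts to an admissible test function for $\sigma_{(1)}^N$), you automatically get $\sigma_{m_1}(M,g_\epsilon)>\sigma_{(1)}^N(A_{1+L/2})-\epsilon$. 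There is no separate verification needed that $\sigma_{m_1}$ is not captured by some $l\ge 2$ sector.
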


\begin{proof}
We consider two cases. 
\begin{enumerate}
    \item Let $M=[0,L] \times \S^{n-1}$, with $L\le L_1$. We write $f_1^1$ an eigenfunction associated with $\sigma_1(M,g)$. Since $L \le L_1$, we have $B_n(L)=\sigma_1^N(A_{1+L/2})$ and therefore $f_1^1(r,p)=u_1(r)S_1^1(p)$.

    We consider now a new function denoted $f_1^2$ given by
    $
        f_1^2(r,p)=u_1(r)S_1^2(p).
    $
    We can check that 
    \begin{align*}
        \int_\Sigma f_1^2(r,p)dV\Sigma=0 \quad \mbox{ and } \quad \int_\Sigma f_1^1(r,p)f_1^2(r,p)dV\Sigma =0.
    \end{align*}
    Moreover, we have 
    \begin{align*}
        \sigma_1(M, g)=R_g(f_1^1) = R_g(f_1^2).
    \end{align*}
    In the same way, we write 
    \begin{align*}
        f_1^i(r,p) = u_1(r) S_1^i(p), \quad i=1, \ldots, m_1
    \end{align*}
    and we can conclude
    \begin{align*}
        \sigma_1(M,g) = \sigma_2(M,g) = \ldots = \sigma_{m_1}(M, g).
    \end{align*}
    Therefore, we already have a sharp upper bound for these eigenvalues, which is given by $\sigma_1^N(A_{1+L/2})$.
    \item Let $M=[0,L] \times \S^{n-1}$, with $L>L_1$.  
    We call $f_1$ an eigenfunction associated with $\sigma_1(M,g)$. Since $L > L_1$, we have $B_n(L)=\sigma_0^D(A_{1+L/2})$. Therefore $f_1(r,p)=u_0(r)S_0(p)$.
    
   We write now $f_2^1(r,p) = u_2(r)S_1^1(p)$ an eigenfunction associated with $\sigma_2(M,g)$. As before, we then consider $m_1$ functions denoted $f_2^i(r,p) = u_2(r)S_1^i(p), i= 1, \ldots, m_1$ and we get 
    \begin{align*}
    \sigma_2(m,g) = \ldots = \sigma_{m_1+1}(M,g).
    \end{align*}
     We consider a function $\phi_1(r,p) = \beta_1(r)S_1(p)$ associated with $\sigma_{(1)}^N(A_{1+L/2})$. In the same spirit as before, we define a function
    \begin{align*}
    \Tilde{\phi_1} : [0, L] \times \S^{n-1} &\longrightarrow \R \\
     (r,p) & \longmapsto \left\{
     \begin{array}{cc}
         \phi_1(r,p) & \mbox{ if } 0 \le r \le L/2  \\
          \phi_1(L-r,p) & \mbox{ if } L/2 \le r \le L. 
     \end{array}
     \right.
\end{align*}
We can check that the function $\tilde{\phi_1}$ is continuous and that $\int_\Sigma \tilde{\phi_1}dV_\Sigma=0$. Moreover, it is immediate that $\int_\Sigma \tilde{\phi_1} f_1 dV_\Sigma = 0$. Hence we can use $\tilde{\phi_1}$ as a test function for $\sigma_2(M, g)$ and as we did before, we can see that
\begin{align*} 
    \sigma_2(M, g) & \le R_g(\Tilde{\phi_1}) \nonumber \\
    & < R_{\tilde{g}}(\tilde{\phi_1}) \nonumber \mbox{ where } \tilde{g} \mbox{ comes from Theorem \ref{thm : pricipal}} \\
    & = \frac{\int_0^L \int_{\S^{n-1}} \left( (\partial_r \Tilde{\phi_1})^2 + \frac{1}{\tilde{h}(r)^2} | \Tilde{\nabla} \Tilde{\phi_1}|^2  \right) \tilde{h}(r)^{n-1}   dV_{g_0}dr}{\int_\Sigma \Tilde{\phi_1}^2(0,p) dV_{g_0}} \nonumber \\
    & = \frac{2 \times \int_0^{L/2} \int_{\S^{n-1}} \left( (\partial_r \Tilde{\phi_1})^2 + \frac{1}{\tilde{h}(r)^2} | \Tilde{\nabla} \Tilde{\phi_1}|^2  \right) \tilde{h}(r)^{n-1}  dV_{g_0}dr}{2 \times \int_{\S^{n-1}} \Tilde{\phi_1}^2(0,p) dV_{g_0}} \nonumber \\
    & = \frac{ \int_0^{L/2} \int_{\S^{n-1}} \left( (\partial_r \phi_1)^2 + \frac{1}{\tilde{h}(r)^2} | \Tilde{\nabla} \phi_1|^2  \right) \tilde{h}(r)^{n-1}   dV_{g_0}dr}{ \int_{\S^{n-1}} \phi_1^2(0,p) dV_{g_0}} \nonumber \\
    & < \frac{\int_0^{L/2} \int_{\S^{n-1}} \left( (\partial_r \phi_1)^2 + \frac{1}{(1+r)^2} | \Tilde{\nabla}\phi_1|^2  \right) (1+r)^{n-1}   dV_{g_0}dr}{\int_{\S^{n-1}} \phi_1^2(0,p) dV_{g_0}} \nonumber \\
    & = \sigma_{(1)}^N(A_{1+L/2}).
\end{align*}
\end{enumerate}

Therefore, regardless of the value of $L>0$, we have 
\begin{align*}
    \sigma_2(M,g) = \ldots = \sigma_{m_1}(M,g) < B_n^2(L) = \ldots = B_n^{m_1}(L) := \sigma_{(1)}^N(A_{1+L/2}).
\end{align*}
Moreover, this bound is sharp : for all $\epsilon >0$, there exists a metric $g_\epsilon$ on $M=[0,L]\times \S^{n-1}$ such that $\sigma_2(M, g_\epsilon) = \ldots = \sigma_{m_1}(M,g_\epsilon) > \sigma_{(1)}^N(A_{1+L/2}) - \epsilon$. Indeed, as before it is sufficient to choose the metric $g_\epsilon = dr^2 + h_\epsilon^2g_0$, with the function $h_\epsilon$ such that 
\begin{enumerate}
    \item $h_\epsilon$ is symmetric;
    \item For all $r \in [0, L/2- \delta]$, we have $h_\epsilon(r)=1+r$, with $\delta$ small enough. 
\end{enumerate}
The proof of sharpness goes as in the proof of \Cref{thm : principal deux}.

\end{proof}

The upper bound we gave, namely $ \sigma_{(1)}^N(A_{1+L/2})$, depends on the dimension of $M$ and the meridian length $L$ of $M$. It is easy to see that $\sigma_{(1)}^N(A_{1+L/2})$, which is strictly increasing, satisfies
\begin{align*}
    \sigma_{(1)}^N(A_{1+L/2}) = \frac{(n-1)\left( \left(1+L/2\right)^n -1 \right)}{\left( 1+ L/2 \right)^n +n-1} \underset{L \rightarrow \infty}{\longrightarrow} n-1.
\end{align*}
Therefore, we have got a bound that depends only on the dimension $n$ of $M$. Given a hypersurface of revolution $(M, g)$ with two boundary components, we have 
\begin{align*}
    \sigma_2(M,g) = \ldots = \sigma_{m_1}(M,g) < B_n^2 = \ldots = B_n^{m_1} := n-1.
\end{align*}
Moreover, this bound is sharp, in the sense that for all $\epsilon >0$, there exists a hypersurface of revolution $(M_\epsilon, g_\epsilon)$ such that $\sigma_2(M_\epsilon,g_\epsilon) = \ldots = \sigma_{m_1}(M_\epsilon,g_\epsilon) > n-1 - \epsilon$. 
Indeed, we can choose $L_\epsilon$ large enough for $\sigma_1^N(A_{1+L_\epsilon/2})$ to be $\frac{\epsilon}{2}$-close to $n-1$, and then define $M_\epsilon := [0,L_\epsilon] \times \S^{n-1}$. Now we can put a metric $g_\epsilon$ on $M_\epsilon$ such that $\sigma_2(M_\epsilon, g_\epsilon) = \ldots = \sigma_{m_1}(M_\epsilon, g_\epsilon) > \sigma_1^N(A_{1+L_\epsilon/2}) - \frac{\epsilon}{2}$, and we are done.
\medskip

Our calculations showed that the eigenvalues $k = 2, \ldots, m_1$ have a critical length at infinity.

\subsection{Upper bound for \texorpdfstring{$\sigma_{m_1+1}(M,g)$}{sigmam1plus1}} \label{subsect : sigmam1plus1}

Now we are interested in the next eigenvalue, namely $\sigma_{m_1+1}(M, g)$. For that reason, we define a new special meridian length $L_2$: it is the unique solution of the equation $\sigma_0^D(A_{1+L/2}) = \sigma_{(2)}^N(A_{1+L/2})$. We remark that we have $L_2<L_1$. Indeed, for all $L >0$, we have $\sigma_{(2)}^N(A_{1+L/2}) > \sigma_{(1)}^N(A_{1+L/2})$, and the function $L \longmapsto \sigma_0^D(A_{1+L/2})$ is strictly decreasing. Hence, comparing the intersection of the curves gives $L_2 < L_1$.  We prove the following theorem:
\begin{thm} \label{thm : bound sigmam1plus1}
    Let $(M = [0, L] \times \S^{n-1}, g)$ be a hypersurface of revolution in Euclidean space with two boundary components each isometric to  $\S^{n-1}$ and dimension $n \ge 3$. Let $m_1$ be the multiplicity of the first non trivial eigenvalue of the classical Laplacian problem on $(\S^{n-1}, g_0)$. Then we have
    \begin{align*}
    \sigma_{m_1+1}(M,g) < B_n^{m_1+1}(L) := \left\{
    \begin{array}{ll}
         \sigma_{(2)}^N(A_{1+L/2}) & \mbox{ if } L \le L_2  \\
         \sigma_{(0)}^D(A_{1+L/2}) & \mbox{ if } L_2 < L \le L_1 \\
         \sigma_{(1)}^N(A_{1+L/2}) & \mbox{ if } L_1 < L.
    \end{array}
    \right.
\end{align*}
Moreover, this bound is sharp: for all $\epsilon >0$, there exists a metric of revolution $g_\epsilon$ on $M$ such that 
\begin{align*}
    \sigma_{m_1+1}(M, g_\epsilon) > B_n^{m_1+1}(L) - \epsilon.
\end{align*}
\end{thm}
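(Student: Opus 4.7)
My plan is to bound $\sigma_{m_1+1}(M,g)$ by applying the Min-Max principle to a carefully chosen $(m_1+2)$-dimensional test subspace of $H^1(M)$, with the choice depending on the three regimes announced. The building blocks are the same as in the proof of Theorem \ref{thm : principal deux}: given a Steklov-Neumann eigenfunction $\phi_k(r,p) = \beta_k(r) S_k(p)$ on $A_{1+L/2}$ (resp.\ a Steklov-Dirichlet eigenfunction $\varphi_k(r,p) = \alpha_k(r) S_k(p)$), I extend it symmetrically (resp.\ antisymmetrically) across $r = L/2$ to produce $\tilde\phi_k$ (resp.\ $\tilde\varphi_k$) in $H^1(M)$, with vanishing boundary mean $\int_\Sigma \tilde\phi_k \, dV_\Sigma = \int_\Sigma \tilde\varphi_k \, dV_\Sigma = 0$ and the double strict inequality
\begin{align*}
R_g(\tilde\phi_k) < R_{\tilde g}(\tilde\phi_k) < \sigma^N_{(k)}(A_{1+L/2}),
\end{align*}
(and analogously for $\tilde\varphi_k$), obtained via the auxiliary metric $\tilde g$ of Theorem \ref{thm : pricipal} together with the continuum of points on which $\tilde h(r) < 1+r$.

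The test subspace $E$ is then chosen as follows. In Case 1 ($L \le L_2$), take $E := \mathrm{span}\bigl(1,\,\tilde\phi_1^1,\ldots,\tilde\phi_1^{m_1},\,\tilde\phi_2^1\bigr)$ where the $\tilde\phi_1^i$ use the $m_1$ independent first spherical harmonics $S_1^i$ and $\tilde\phi_2^1$ uses a second spherical harmonic $S_2^1$. In Case 2 ($L_2 < L \le L_1$), replace $\tilde\phi_2^1$ by $\tilde\varphi_0$, the antisymmetric extension of the Steklov-Dirichlet eigenfunction for $\sigma^D_{(0)}(A_{1+L/2})$, whose radial part is multiplied by the constant spherical harmonic $S_0$. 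Case 3 ($L > L_1$) is already contained in the proof of Theorem \ref{thm : bound sigma2 sigmam1} (case 2 there), which delivers $\sigma_2(M,g) = \cdots = \sigma_{m_1+1}(M,g) < \sigma^N_{(1)}(A_{1+L/2})$.

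The key computation is that orthogonality of distinct spherical harmonics on $\S^{n-1}$ simultaneously diagonalises the numerator $\int_M |\nabla f|^2 \, dV_g$ and the boundary denominator $\int_\Sigma f^2 \, dV_\Sigma$ across the spanning functions of $E$; consequently, for every nonzero $f \in E$, the Rayleigh quotient $R_g(f)$ is a convex combination of $0$, the $R_g(\tilde\phi_1^i)$ and $R_g(\tilde\phi_2^1)$ (or $R_g(\tilde\varphi_0)$ in Case 2), and is therefore strictly dominated by their maximum. Since each such Rayleigh quotient is strictly less than the corresponding mixed-problem eigenvalue, and since the definitions of $L_1$ and $L_2$ ensure that $B_n^{m_1+1}(L)$ is exactly the maximum of $\{\sigma^N_{(1)}, \sigma^N_{(2)}\}$ in Case 1 and of $\{\sigma^N_{(1)}, \sigma^D_{(0)}\}$ in Case 2, compactness of the unit sphere in $E$ and the Min-Max principle deliver the strict upper bound $\sigma_{m_1+1}(M,g) < B_n^{m_1+1}(L)$.

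For sharpness, I imitate the scheme used after (\ref{fml : ineg sharp}) in the proof of Theorem \ref{thm : principal deux}: given $\epsilon > 0$, construct a symmetric metric $g_\epsilon = dr^2 + h_\epsilon^2 g_0$ with $h_\epsilon(r) = 1+r$ on $[0, L/2 - \delta]$ for a suitably small $\delta$, so that $(M, g_\epsilon)$ is isometric, away from a thin central neck, to two copies of $A_{1+L/2}$. By Proposition \ref{prop : sep variables}, an eigenfunction $f_{m_1+1}$ for $\sigma_{m_1+1}(M, g_\epsilon)$ separates as $u_l(r) S_l^j(p)$ and may be chosen symmetric or antisymmetric about $r = L/2$; its restriction to $[0, L/2] \times \S^{n-1}$ is then admissible as a test function for $\sigma^N_{(l)}(A_{1+L/2})$ in the symmetric case and for $\sigma^D_{(l)}(A_{1+L/2})$ in the antisymmetric case, and a straightforward comparison of Rayleigh quotients (as in the proof of Theorem \ref{thm : principal deux}) yields $R_{A_{1+L/2}}(f_{m_1+1}\vert_{[0, L/2]}) < \sigma_{m_1+1}(M, g_\epsilon) + \epsilon$. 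The main obstacle is identifying the correct pair $(l, \mathrm{symmetry})$ carried by $f_{m_1+1}$ in each regime: one has to know which mixed-problem eigenvalue it approaches as $g_\epsilon$ degenerates to $g^*$. I expect to handle this by combining the upper bound proved above with a direct ordering argument on the first $m_1 + 2$ eigenvalues of $(M, g_\epsilon)$, together with the Theorem \ref{thm : bound sigma2 sigmam1} description of $\sigma_1, \ldots, \sigma_{m_1}$, which forces $f_{m_1+1}$ to have the structure dictated by the case and therefore yields the matching lower bound $B_n^{m_1+1}(L) \le R_{A_{1+L/2}}(f_{m_1+1}\vert_{[0, L/2]})$.
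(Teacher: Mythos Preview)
Your argument is correct and close in spirit to the paper's, but the packaging is genuinely different and in one respect cleaner. The paper establishes the bound via the variational characterization~(\ref{form : car var}): in each regime it produces a \emph{single} test function ($\tilde\phi_2$ in Case~1, $\tilde\varphi_0$ in Case~2) and checks that it is $\Sigma$-orthogonal to the actual eigenfunctions $f_0,f_1,\ldots,f_{m_1}$ of $(M,g)$. For that, the paper first asserts (drawing on Theorem~\ref{thm : bound sigma2 sigmam1}) that when $L\le L_1$ the eigenfunctions $f_1,\ldots,f_{m_1}$ have the form $u_1(r)S_1^i(p)$. Your route sidesteps this entirely: by feeding the Min-Max principle an explicit $(m_1+2)$-dimensional subspace $E$ built from $1$, the $\tilde\phi_1^i$, and either $\tilde\phi_2^1$ or $\tilde\varphi_0$, and then exploiting the simultaneous diagonalisation of numerator and denominator by spherical-harmonic orthogonality, you never need to know what the true eigenfunctions of $(M,g)$ look like. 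That is a real gain in robustness. One small wording slip: a convex combination is \emph{dominated} (not strictly dominated) by the maximum of its entries; the strictness in your chain comes only at the next step, from $R_g(\tilde\phi_k)<\sigma^N_{(k)}(A_{1+L/2})$ and $R_g(\tilde\varphi_0)<\sigma^D_{(0)}(A_{1+L/2})$.

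For sharpness you and the paper proceed identically (the paper simply writes ``the proof of sharpness goes as in the proof of Theorem~\ref{thm : principal deux}''). Your identification of the ``main obstacle'' is accurate, and your proposed resolution is the right one: for the symmetric metric $g_\epsilon$ the eigenfunctions split into symmetric/antisymmetric parts, the first eigenvalue in each $j$-sector is symmetric (Neumann-type) by the usual Dirichlet--Neumann comparison on the half-interval, and for $g_\epsilon$ close to $g^*$ the ordering of the first $m_1+2$ eigenvalues matches the ordering of the mixed eigenvalues on $A_{1+L/2}$ dictated by the definitions of $L_1$ and $L_2$. This forces $f_{m_1+1}$ to carry $(j,\text{symmetry})=(2,\text{sym})$ in Case~1, $(0,\text{antisym})$ in Case~2, and $(1,\text{sym})$ in Case~3, after which the comparison $R_{A_{1+L/2}}(f_{m_1+1}|_{[0,L/2]})<\sigma_{m_1+1}(M,g_\epsilon)+\epsilon$ closes the argument exactly as in~(\ref{fml : ineg sharp}).
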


A plot of the function $L \longmapsto B_n^{m_1+1}(L)$ can be useful to visualize the sharp upper bound, see Fig. \ref{fig : sigmam1plus1}.

\begin{proof}
Now we have to distinguish three cases.
\begin{enumerate}
    \item Let $M=[0,L]\times \S^{n-1}$, with $L \le L_2$. We call $f_1^1(r,p) = u_1(r)S_1^1(p), \ldots, f_1^{m_1}(r,p)= u_1(r)S_1^{m_1}(p)$ the Steklov eigenfunctions associated with $\sigma_{(1)}(M,g)= \sigma_1(M,g) = \ldots = \sigma_{m_1}(M,g)$. 
    
    There exists an eigenfunction $\phi_2(r,p) = \beta_2(r) S_2(p)$ associated with $\sigma_{(2)}^N(M,g)= \sigma_{m_1+1}^N(M,g)$. We define a new function
    \begin{align*}
    \Tilde{\phi_2} : [0, L] \times \S^{n-1} &\longrightarrow \R \\
     (r,p) & \longmapsto \left\{
     \begin{array}{cc}
         \phi_2(r,p) & \mbox{ if } 0 \le r \le L/2  \\
          \phi_2(L-r,p) & \mbox{ if } L/2 \le r \le L. 
     \end{array}
     \right.
\end{align*}
This function is continuous, satisfies $\int_\Sigma \tilde{\phi_2}dV_\Sigma =0$ and we can check that for all $i=1, \ldots, m_1$,
\begin{align*}
    \int_\Sigma \tilde{\phi_2} f_1^idV_\Sigma =0.
\end{align*}
Hence we can use $\tilde{\phi_2}$ as a test function for $\sigma_{m_1+1}(M,g)$. The same kind of calculations as in Inequality (\ref{eqn : sigma 1 neumann}) show that we have 
\begin{align*}
    \sigma_{m_1+1}(M, g) < \sigma_{(2)}^N(A_{1+L/2}),
\end{align*}
which is a sharp upper bound.
    \item Let $M=[0,L]\times \S^{n-1}$, with $L_2 < L \le L_1$.  We call $f_1^1(r,p) = u_1(r)S_1^1(p), \ldots, f_1^{m_1}(r,p)= u_1(r)S_1^{m_1}(p)$ the Steklov eigenfunctions associated with $\sigma_{(1)}(M,g)= \sigma_1(M,g) = \ldots = \sigma_{m_1}(M,g)$.

 There exists an eigenfunction $\varphi_0(r,p) = \alpha_0(r) S_0(p)$ associated with $\sigma_{0}^D(M,g)$. We use the function $\tilde{\varphi_0}$ we defined before, namely
 \begin{align*} 
    \tilde{\varphi_0} : [0,L] \times \S^{n-1} & \longrightarrow \R \\ 
               (r, p) &  \longmapsto \left\{  
\begin{array}{cc}
     \varphi_0(r,p) & \mbox{ if } 0 \le r \le L/2  \\
     - \varphi_0(L-r,p) & \mbox{ if } L/2 \le r \le L. 
\end{array} \nonumber
\right.
\end{align*}
We already saw that $\tilde{\varphi_0}$ is continuous, that $\int_\Sigma \tilde{\varphi_0}dV_\Sigma =0$ and we can check that for all $i = 1, \ldots, m_1$,
\begin{align*}
    \int_\Sigma \tilde{\varphi_0}f_1^i dV_\Sigma =0.
\end{align*}
 Using $\tilde{\varphi_0}$ as a test function, we get 
 \begin{align*}
     \sigma_{m_1+1}(M,g) < \sigma_{(0)}^D(A_{1+L/2}),
 \end{align*}
 which is a sharp upper bound.
    \item Let $M=[0,L]\times \S^{n-1}$, with $L_1 \le L$. Then $\sigma_1(M, g) < \sigma_2(M, g) = \ldots = \sigma_{m_1+1}(M, g)$. We already dealt with this case in the proof of \Cref{thm : bound sigma2 sigmam1} and we saw that
    \begin{align*}
        \sigma_{m_1+1}(M, g) < \sigma_{(1)}^N(A_{1+L/2}),
    \end{align*}
 which is a sharp upper bound.

\end{enumerate}

Therefore, given a hypersurface of revolution $(M=[0, L] \times \S^{n-1}, g)$, we have a sharp upper bound for $\sigma_{m_1+1}(M,g)$, depending on $n$ and $L$, given by 
\begin{align*}
    \sigma_{m_1+1}(M,g) < B_n^{m_1+1}(L) := \left\{
    \begin{array}{ll}
         \sigma_{(2)}^N(A_{1+L/2}) & \mbox{ if } L \le L_2  \\
         \sigma_{(0)}^D(A_{1+L/2}) & \mbox{ if } L_2 < L \le L_1 \\
         \sigma_{(1)}^N(A_{1+L/2}) & \mbox{ if } L_1 < L.
    \end{array}
    \right.
\end{align*}

\begin{figure}[ht]
\centering
\includegraphics[scale=0.48]{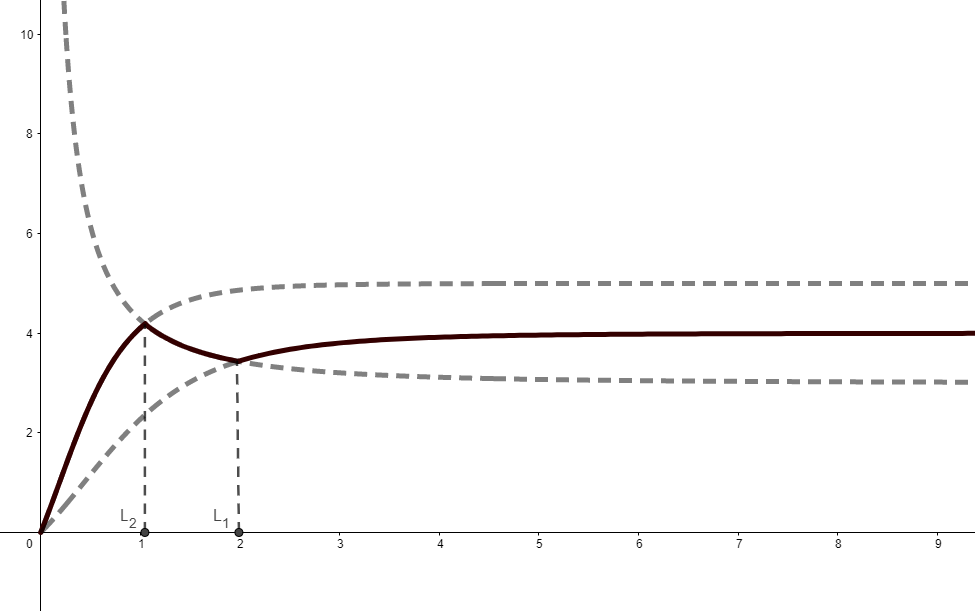}
\caption{Representation of the case $n = 5$. The solid curve is the bound given in \Cref{thm : bound sigmam1plus1}.
}
\label{fig : sigmam1plus1}
\end{figure}

The proof of sharpness goes as in the proof of \Cref{thm : principal deux}.

\end{proof}

From this, one can once again look for a sharp upper bound for $\sigma_{m_1+1}(M,g)$ that depends only on the dimension $n$ of $M$. This bound is given by
\begin{align} \label{eqn : cas m1plus1}
    \sigma_{m_1+1}(M,g) < B_n^{m_1+1} & := \max \left\{ \sigma_0^D(A_{1+{L_2}/2}), n-1\right\} \nonumber \\
    & =  \left\{
   \begin{array}{ll}
       \sigma_0^D(A_{1+{L_2}/2})  & \mbox{ if } 3 \le n \le 6  \\
        n-1 & \mbox{ if } 7 \le n.
    \end{array}
    \right.
\end{align}
A proof of (\ref{eqn : cas m1plus1}) is given in Appendix \ref{appen : cas m1plus1}.
\medskip

Therefore, the eigenvalue $k = m_1+1$ possesses a finite critical length if $3 \le n \le 6$, and it has a critical length at infinity if $7 \le n$.

\begin{rem} \label{rem : borne sans L}
It is then tempting to search for an expression for $B_n^k := \sup_{L \in \R_+^*} \{B_n^k(L)\}$ for any $n$ and $k$; but it seems to be hard to give an explicit formula for it. Indeed, as Sect. \ref{subsect : sigma2 sigmam1} and \ref{subsect : sigmam1plus1} suggest, the function $ L \longmapsto B_n^k(L)$ is hard to determine and can be either smooth (as in Sect.  \ref{subsect : sigma2 sigmam1} for instance) or piecewise smooth (as Sect.  \ref{subsect : sigmam1plus1} for instance). In the second case, there are possibly many irregular points that we have to consider. Moreover, depending on the value of $n$ and $k$: 
\begin{enumerate}
    \item Either $k$ has a finite critical length, i.e $B_n^k =B_n^k(L_k)$ for a certain $L_k \in \R_+^*$. That is for instance the case  of $\sigma_1(M,g)$ or $\sigma_{m_1+1}$ if $n=3, 4, 5 \mbox{ or } 6$;
    \item Or $k$ has a critical length at infinity, i.e $B_n^k = \lim_{L \to \infty} B_n^k(L)$. That is for instance the case of $\sigma_2(M,g), \ldots, \sigma_{m_1}(M,g)$.
\end{enumerate}
Furthermore, we will prove in Sect. \ref{sect : proof infinity de k avec L fini} that for all $n  \ge 3$, there are infinitely many $k$ that have a finite critical length associated to them. In all these cases, the function $ L \longmapsto B_n^k(L)$ is piecewise smooth.
\end{rem}

\section{Critical lengths of hypersurfaces of revolution} \label{sect : proof infinity de k avec L fini}

We recall that given $n \ge 3$, we are interested in giving information about the set of finite critical lengths. We want to prove \Cref{thm : principal 4}, i.e that there are infinitely many $k$ such that $B_n^k = B_n^k(L_k)$ for a certain finite $L_k \in \R_+^*$, and that the sequence of critical lengths converges to $0$.

\begin{proof}
As before, for $j \ge 0$, we denote by $m_j$ the number given by the formula (\ref{frm : multiplicite}), which is the multiplicity of $\sigma_{(j)}^D(A_R)$ as well as the multiplicity of $\sigma_{(j)}^N(A_R)$. 
Let $i \ge 2$ be an integer. We claim that for all $k$ such that 
\begin{align} \label{eqn : k donnant L etoile}
    m_0 + \sum_{j=1}^{i-1} 2m_j + m_i < k \le m_0 + \sum_{j=1}^{i} 2m_j,
\end{align}
we have 
\begin{align*}
    B_n^k = B_n^k(L_k)
\end{align*}
for a certain  $L_k \in \R_+^*$.
\medskip

Indeed, let $k$ satisfy (\ref{eqn : k donnant L etoile}). Then, because of the asymptotic behaviour of the functions $L \longmapsto \sigma_{(j)}^D(A_{1+L/2})$ and $L \longmapsto \sigma_{(j)}^N(A_{1+L/2})$ as $L \to \infty$,  there exists $C >0$ such that for all $L > C$, we have $B_n^k(L) = \sigma_{(i)}^D(A_{1+L/2})$. But we can compute that
\begin{align*}
    \frac{\partial}{\partial L} \sigma_{(i)}^D(A_{1+L/2}) = - \frac{4(L+2)(2i+n-2)^2(1+L/2)^{2i+n}}{\left(4(1+L/2)^{2i+n}-L^2-4L-4   \right)^2} < 0,
\end{align*}
which means that the function $L \longmapsto \sigma_{(i)}^D(A_{1+L/2})$ is strictly decreasing. Hence, for $L > L'  > C$, we have $B_n^k(L) < B_n^k(L')$. Therefore, for such a $k$, we have  
\begin{align*}
    B_n^k= B_n^k(L_k)
\end{align*}
with $L_k$ finite, that is $L_k$ is a  finite critical length associated to $k$.
\medskip

Then, defining $k_1 := 1$ and for each $i \ge 2$, defining $k_i := m_0 + \sum_{j=1}^{i} 2m_j$, we get a sequence $(k_i)_{i=1}^\infty$ such that 
\begin{align*}
    B_n^{k_i} = B_n^{k_i}(L_i)
\end{align*}
for a certain $L_i \in \R_+^*$ finite.
\medskip

Now we  want to  prove that the sequence of finite critical lengths $(L_i)_{i=1}^\infty$ converges to $0$.

Let $i \ge 1$. We know that $L_i$ has to be a solution of the equation $\sigma_{(j_1)}^D(A_{1+L/2}) = \sigma_{(j_2)}^N(A_{1+L/2})$ for a certain ordered pair $(j_1, j_2) \in \N^2$. As said before, for $L >C$, we have $B_n^{k_i}(L) = \sigma_{(i)}^D(A_{1+L/2})$, hence $L_i \le L_i^*$, where $L_i^*$ is the unique solution of the equation 
\begin{align*}
    \sigma_{(i)}^D(A_{1+L/2}) = \sigma_{(i+1)}^N(A_{1+L/2}).
\end{align*}
Therefore, in order to prove that $L_i \underset{i \to \infty}{\longrightarrow} 0$, we prove that $L_i^* \underset{i \to \infty}{\longrightarrow} 0$.

Using Propositions \ref{prop : SD} and \ref{prop : SN}, making some calculations and substituting $(1+L/2)$ by $R$, we can see that solving 
\begin{align*}
    \sigma_{(i)}^D(A_{1+L/2}) = \sigma_{(i+1)}^N(A_{1+L/2})
\end{align*}
is equivalent to finding the unique value $R_i \in (1, \infty)$ which solves the equation 
\begin{align*}
    (i+1)R^{2i+n-2}\left( R^{2i+n} - R^2(2i+n-1)- \frac{(i+n-1)(2i+n-1)}{i+1} \right) +(i+n-1)=0.
\end{align*}

We call 
\begin{align*}
   \underbrace{(i+1)R^{2i+n-2}    \left( \underbrace{ R^{2i+n} - R^2(2i+n-1)- \frac{(i+n-1)(2i+n-1)}{i+1} }_{=: \psi_i(R)} \right) +(i+n-1)}_{=: \Psi_i(R)}.
\end{align*}
Because $ (i+1)R^{2i+n-2} >0$ and $(i+n-1) >0$, then for $R_i$ to be the solution of the equation $\Psi_i(R) =0$, it is necessary that $\psi_i(R_i) <0$.
\medskip

Then we have 
\begin{align*}
    R^{2i +n} & < R^2(2i+n-1) + \frac{(i+n-1)(2i+n-1)}{i+1} \\
            & < R^2 \left( (2i+n-1) + \frac{(i+n-1)(2i+n-1)}{i+1}  \right) \\
            & = R^2 \left( \frac{(2i+n-1)(2i+n)}{i+1}  \right) \\
            & < R^2(2i+n)^2.
\end{align*}

Therefore we have 
\begin{align*}
    \ln(R) < \frac{2\ln(2i+n)}{2i+n-2}  
\end{align*}
and thus
\begin{align*}
    R < e^{\frac{2\ln(2i+n)}{2i+n-2} } 
\end{align*}

Remember that we substituted $(1+L/2)$ by $R$, and then  the unique solution of the equation 
\begin{align*}
    \sigma_{(i)}^D(A_{1+L/2}) = \sigma_{(i+1)}^N(A_{1+L/2})
\end{align*}
is a value $L_i^*$ wich satisfies
\begin{align*}
    0 < L_i^* < 2 \left( e^{\frac{2\ln(2i+n)}{2i+n-2}  }-1   \right). 
\end{align*}
Therefore, since $\frac{2 \ln(2i +n)}{2i +n -2} \underset{i \to \infty}{\longrightarrow} 0$, we have
\begin{align*}
    L_i < L_i^* \underset{i \to \infty}{\longrightarrow} 0. 
\end{align*}
In particular, for each $\delta >0$ there exists $k_0 \in \N$ such that for each $k >k_0$ which has a finite critical length $L_k$, then $L_k < \delta$.

\end{proof}

\begin{rem}
The condition given by (\ref{eqn : k donnant L etoile}) is sufficient, but is not a necessary one. Indeed, $k=1$ does not meet condition (\ref{eqn : k donnant L etoile}) but we have $B_n^1 = B_n^1(L_1)$, where $L_1$ is given by Corollary \ref{cor : corollaire du principal deux}. This consideration naturally leads to the following open question:
\end{rem}

\begin{question} \label{q : open}
Given $n \ge 3$, are there finitely or infinitely many $k \in \N$ such that $k$ has a critical length at infinity? 
\end{question}

\begin{appendices}

\section{Proof of equality (\ref{eqn : cas m1plus1})} \label{appen : cas m1plus1}

We know that there exists a unique $L_2 >0$ such that $\sigma_{(0)}^D(A_{1+L_2/2})= \sigma_{(2)}^N(A_{1+L_2/2})$. We want to choose, depending on the value of $n$, if $\sigma_{(0)}^D(A_{1+L_2/2})$ is bigger or smaller than $n-1$. For this purpose, we call $L_D$ the unique positive value such that $\sigma_{(0)}^D(A_{1+L_D/2})=n-1$, and we call $L_N$ the unique positive value such that $\sigma_{(2)}^N(A_{1+L_N/2}) = n-1$. 
\medskip

Then we have the following fact: if $L_N < L_D$, we have $\sigma_{(0)}(A_{1+L_2/2}) > n-1$. On the contrary, if $L_D < L_N$, we have $\sigma_{(0)}(A_{1+L_2/2}) < n-1$.
\begin{figure}[H]
    \centering
    \includegraphics[scale=0.5]{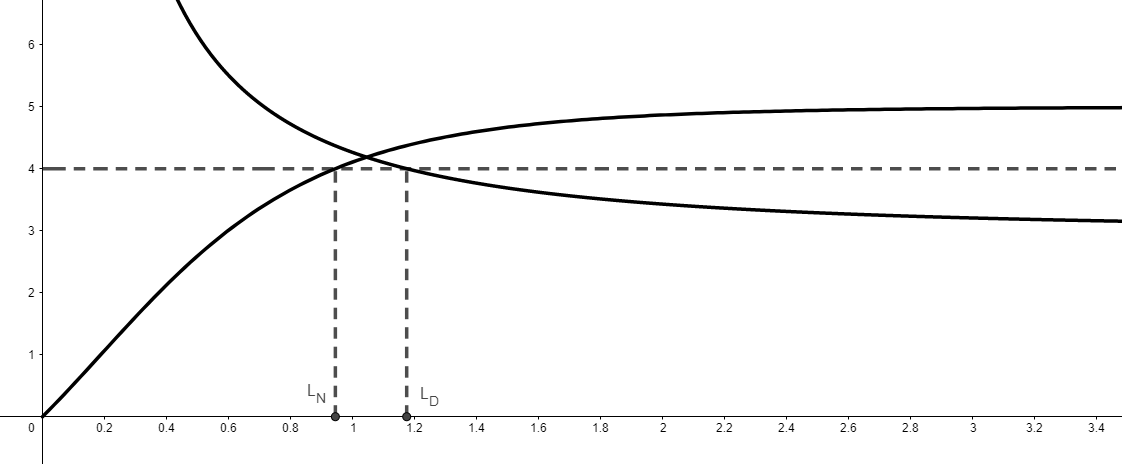}
    \caption{Representation of the case $n=5$. Since $L_N < L_D$, then $\sigma_{(0)}(A_{1+L_2/2}) > n-1$.}
    \label{fig: annexe}
\end{figure}

Hence, we solve the equation $\sigma_{(0)}^D(A_{1+L_D/2}) = n-1$, i.e we find the unique $L_D >0$ such that
\begin{align*}
    \frac{(n-2)(1+L_D/2)^{n-2}}{(1+L_D/2)^{n-2}-1}=n-1.
\end{align*}
We find 
\begin{align*}
    L_D = 2(n-1)^\frac{1}{n-2}-2.
\end{align*}
Similarly, solving the equation 
\begin{align*}
    \frac{2n((1+L_N/2)^{n+2}-1)}{2(1+L_N/2)^{n+2}+n} = n-1
\end{align*}
leads to 
\begin{align*}
    L_N = 2\left(\frac{n(n+1)}{2}\right)^{\frac{1}{n+2}}-2.
\end{align*}
We have to find for which values of $n$ we have $L_D < L_N$ and vice versa. This leads to the inequality
\begin{align*}
    \left( \frac{n(n+1)}{2} \right)^{\frac{1}{n+2}} > \left( n-1   \right)^{\frac{1}{n-2}},
\end{align*}
 which is equivalent to
 \begin{align*}
    \left( \frac{n(n+1)}{2} \right)^{\frac{n-2}{n+2}}  >  n-1 .
\end{align*}

We suppose $n \ge 9$.
\begin{align*}
    \left( \frac{n(n+1)}{2} \right)^{\frac{n-2}{n+2}} &> \left( \frac{n^2}{2} \right)^{\frac{n-2}{n+2}} = \frac{1}{2^{\frac{n-2}{n+2}}} n^{\frac{2n-4}{n+2}} 
     > \frac{1}{2}  n^{\frac{2n-4}{n+2}} 
   \ge \frac{1}{2} n^{\frac{14}{11}}.
\end{align*}
We analyze the function $f : [9, \infty ) \longrightarrow \R, x \longmapsto \frac{1}{2}x^{\frac{14}{11}}$. 
We have $f(9) > 8$, and $f'(x) = \frac{14}{22} x^{\frac{3}{11}}$. We can compute that $f'(9) >1$ and since $f''(x) = \frac{42}{242}x^{\frac{-9}{11}} >0$ for all $x \in [9, \infty)$, we can conclude $f'(x) >1 $ for all $x \in [9, \infty)$. Hence, $f(x) > x-1$ for all $x \in [9, \infty)$.
\medskip

Therefore, for all integers $n \ge 9$,  we have 
\begin{align*}
    \left( \frac{n(n+1)}{2} \right)^{\frac{n-2}{n+2}}  > n-1
\end{align*}
and then $L_D < L_N$. We can  compute  the cases $n=3, \ldots, 8$ and we can conclude that 
\begin{align*}
    \left\{
    \begin{array}{ll}
      L_N < L_D   & \mbox{ if } 3 \le n \le 6  \\
      L_D < L_N   & \mbox{ if } 7 \le n. 
    \end{array}
    \right.
\end{align*}
Therefore, we have
\begin{align*}
    \max \left\{ \sigma_0^D(A_{1+{L_2}/2}), n-1\right\} & = \left\{
    \begin{array}{ll}
       \sigma_0^D(A_{1+{L_2}/2})  & \mbox{ if } 3 \le n \le 6  \\
        n-1 & \mbox{ if } 7 \le n.
    \end{array}
    \right.
\end{align*}

\end{appendices}


\printbibliography

\end{document}